 \newtheorem{thrm}{Theorem}[]
 \newtheorem*{thrm*}{Theorem}
 \newtheorem{lemma}[thrm]{Lemma}
 \newtheorem*{lemma*}{Lemma}
 \newtheorem*{Vermutung*}{Vermutung}
 \newtheorem{cor}[thrm]{Corollary}
 \newtheorem{prop}[thrm]{Proposition}
 \theoremstyle{definition}
\newtheorem*{defi*}{Definition}
  \theoremstyle{remark}
  \newtheorem{remark}[thrm]{Remark}
  \newtheorem*{remark*} {Remark}
\newcommand{\dopp} [1] {\mathbbmss{#1}}
\newcommand{\f}{\begin{align}}
\newcommand{\fo}{\begin{align*}}
\newcommand{\fe}{\end{align}}
\newcommand{\foe}{\end{align*}}
\newcommand{\map}[3] {#1:\, #2\, \longrightarrow \, #3}
\newcommand{\R}{\mathbb{R}}
\newcommand{\E}{\mathbb{E}}
\newcommand{\N}{\mathbb{N}}
\newcommand{\C}{\mathbb{C}}
\newcommand{\U}{\mathcal{U}}
\renewcommand{\a}{\alpha}
\renewcommand{\b}{\beta}
\newcommand{\g}{\gamma}
\renewcommand{\d}{\delta}
\newcommand{\e}{\eta}
\newcommand{\s}{\sigma}
\renewcommand{\l}{\lambda}
\renewcommand{\hat}{\widehat}
\newcommand{\FNV}{\mathcal{F}_{N,V}}
\renewcommand{\phi}{\varphi}
\renewcommand{\e}{\epsilon}
\renewcommand{\epsilon}{\varepsilon}
\newcommand{\F}{\widehat}
\renewcommand{\U}{\mathcal{U}}
\newcommand{\lv}{\left\lvert}
\newcommand{\rv}{\right\rvert}
\newcommand{\Lip}[1]{\left\lvert #1 \right\rvert_{\mathcal{L}}}
\newcommand{\lb}{\left(}
\newcommand{\rb}{\right)}
\newcommand{\lvb}{\big\lvert}
\newcommand{\rvb}{\big\rvert}
\newcommand{\lbb}{\big(}
\newcommand{\rbb}{\big)}
\newcommand{\lee}{\big\{}
\newcommand{\ree}{\big\}}
\newcommand{\Ai}{\textup{Ai}}
\newcommand{\AI}{\mathbb{AI}}
\renewcommand{\O}{\mathcal{O}}
\renewcommand{\Re}{\textup{Re}}
\renewcommand{\Im}{\textup{Im}}
\renewcommand{\k}{\kappa}
\begin{document}
\bigskip

\title[Edge Statistics for Repulsive Particle Systems]{Edge Statistics for a Class of Repulsive Particle Systems}
   \author{Thomas Kriecherbauer}
   \address{Inst. for Mathematics, Univ. Bayreuth, 95440 Bayreuth, Germany}
   \email{thomas.kriecherbauer@uni-bayreuth.de}

  \author{Martin Venker}
   \address{Fac. of Mathematics, Univ. Bielefeld, P.O.Box 100131, 33501 Bielefeld, Germany}
   \email{mvenker@math.uni-bielefeld.de}

\thanks{The second author has been supported by the CRC 701.}

\keywords{Universality, Airy Kernel, Tracy-Widom distribution, Random Matrices, moderate deviations, large deviations}

\begin{abstract}
 We study a class of interacting particle systems on $\R$ which was recently investigated by F. G\"otze and the second author
 \cite{GoetzeVenker}. These ensembles generalize eigenvalue ensembles of Hermitian random matrices by allowing different
 interactions between particles. Although these ensembles are not known to be determinantal one can use the stochastic 
linearization method of \cite{GoetzeVenker} to represent them as averages of determinantal ones. Our results describe the 
transition between universal behavior in the regime of the Tracy-Widom law and non-universal behavior for large 
deviations of the rightmost particle. Moreover, a detailed analysis of the transition that occurs in the regime of moderate 
deviations is provided. We also compare our results with the corresponding ones obtained recently for determinantal ensembles 
\cite{DissSchueler,EKS}. In particular, we discuss how the averaging effects the leading order behavior
in the regime of large deviations. In the analyis of the
averaging procedure we use detailed asymptotic information on the behavior of Christoffel-Darboux kernels that is uniform for
perturbative families of weights. Such results have been provided by K. Schubert, K. Sch\"uler and the authors in \cite{KSSV}.
\end{abstract}

 \maketitle

\section{Introduction and Main Results}
In \cite{BPS}, a class of particle ensembles on the real line with many-body interactions was introduced, generalizing invariant 
random matrix models, and results on the global asymptotics were obtained. G\"otze and the second author 
\cite{GoetzeVenker} presented a stochastic linearization procedure which allowed to study local correlations of particles in the 
special case of two-body interactions. They showed that in the limit of infinitely many particles the bulk correlations are 
described by the celebrated sine kernel law. The purpose of this paper is to extend the 
investigations of
\cite{GoetzeVenker} to the edge.

To be more precise, we consider particle ensembles on $\R$ with density proportional to
\begin{align}
\prod_{i<j}\phi(x_i-x_j)e^{- N\sum_{j=1}^NQ(x_j)},\label{e2}
\end{align} 
where $\map{Q}{\R}{\R}$  is a continuous function of sufficient growth at infinity compared to the continuous function
$\map{\phi}{\R}{[0,\infty)}$.
Apart from some technical conditions we will assume that
\begin{align}
\phi(0)=0,\quad \phi(t)>0\ \text{for }t\not=0\quad \text{and }\ \lim_{t\to0}\frac{\phi(t)}{\lv t\rv^2}=c>0,\label{e4}
\end{align}
or, in other words, $0$ is the only zero of $\phi$ and it is of order $2$. The connection to random matrix theory is as follows: If
$\phi(t)=t^2$, then we have the unitary invariant ensembles $P_{N,Q}$ given by

\begin{align}
 P_{N,Q}(x):=\frac{1}{Z_{N,Q}}\prod_{i<j}\lv x_i-x_j\rv^2 e^{- N\sum_{j=1}^NQ(x_j)}.\label{e1}
\end{align}
Here and in the following, we slightly abuse notation by using the same symbols for a measure and its density.
$P_{N,Q}$ is the joint distribution of the eigenvalues for the random $(N\times N)$ Hermitian matrix with density proportional to
\begin{align*}
 e^{-N\textup{Tr}(Q(X))}
\end{align*}
w.r.t. the Lebesgue measure on the space of $(N\times N)$ Hermitian matrices. Here $\textup{Tr}$ denotes the trace and $Q(X)$ is
defined by spectral calculus. This matrix distribution is invariant under unitary conjugations. See e.g. 
\cite{Deift,Handbook,PasturShcherbina,AGZ} for general references on random matrix theory. The unitary invariant ensembles
\eqref{e1} have been well-understood in terms of asymptotic behavior of the eigenvalues. A common way to describe their properties
is to use correlation functions. For a probability measure $P_N(x)dx$ on $\R^N$ and a natural number $k\leq N$ define
\begin{align*}
 \rho_N^k(x_1,\dots,x_k):=\int_{\R^{N-k}}P_N(x)dx_{k+1}\dots dx_N.
\end{align*}
The $k$-th correlation function is a probability density on $\R^k$, the corresponding measure is called $k$-th correlation measure.
Note that in the Random Matrix literature the $k$-th correlation function often differs from the definition above by the 
combinatorial factor $N!/(N-k)!$. The key for the detailed understanding of the local eigenvalue statistics of unitary invariant 
ensembles $P_{N, Q}$ is that they are determinantal ensembles with a kernel that can be expressed in terms of orthogonal 
polynomials. More precisely, denote by $K_{N,Q}$ the Christoffel-Darboux kernel of degree $N$ associated to the 
measure $e^{-NQ(x)}dx$, i.e.
\begin{align*}
 K_{N,Q}(t,s):=\sum_{j=0}^{N-1}p_j^{(N,Q)}(t)p_j^{(N,Q)}(s) e^{-\frac{N}{2}(Q(t)+Q(s))},
\end{align*}
where $p_j^{(N,Q)}$ is the unique polynomial of degree $j$ with positive leading coefficient such that $(p_j^{(N,Q)})_{j\in\mathbb 
N}$ forms an orthonormal sequence in $L^2(e^{-NQ(x)}dx)$.
Then, for all $1\leq k\leq N$ one can express the $k$-th correlation function by
\begin{align}
\frac{N!}{(N-k)!} \rho_{N,Q}^k(x_1,\dots,x_k) = \det\lb (K_{N,Q}(x_i,x_j))_{1\leq i,j\leq k}\rb. \label{determinantal_relations}
\end{align}
To state our results, it is convenient to rewrite \eqref{e2}. Let $\map{h}{\R}{\R}$ be continuous, even and bounded below. Let
$\map{Q}{\R}{\R}$ be continuous, even and of sufficient growth at infinity. Consider the probability density on $\R^N$ given by
\begin{align} 
  P_{N,Q}^h(x):=\frac{1}{Z_{N,Q}^h}\prod_{i<j}\lv x_i-x_j\rv^2\exp\{- N\sum_{j=1}^NQ(x_j)-\sum_{i<j}h(x_i-x_j)\} , \label{e3}
 \end{align}
where $Z_{N,Q}^h$ is the normalizing constant. Choosing $\phi(t):=t^2\exp\{-h(t)\}$, we see that $P_{N,Q}^h$ is in the form
$\eqref{e2}$. Conditions \eqref{e4} are satisfied as $h$ has no singularities.

G. Borot has pointed out to the second author that $P_{N,Q}^h$ is the eigenvalue distribution of
the unitary invariant ensemble of Hermitian matrices with density proportional to
\begin{align*}
  e^{-N\textup{Tr}(Q(X))-\textup{Tr}(h(X\otimes I-I\otimes X))},
\end{align*}
where $I$ denotes the identity matrix (cf. \cite{Borot1}). Nevertheless, we will prefer to view $P_{N,Q}^h$ as particle ensemble 
and 
therefore
speak of particles
instead of eigenvalues. Furthermore, the ensemble does not seem to be determinantal, in contrast to $P_{N,Q}$.

Our method of proof requires the external field $Q$ to be sufficiently convex. To quantify this, we define for $Q$ being twice
differentiable and convex $\a_Q:=\inf_{t\in\R}Q''(t)$. In the following we will denote by $\rho_{N,Q}^{h,k}$
the $k$-th correlation function or measure of $P_{N,Q}^h$.

The first information we need is the global behavior of the
particles, i.e. their asymptotic location and density. To this end, we state the following theorem which was proved in
\cite{GoetzeVenker}.

\begin{thrm}[{\cite[Theorem 1.1]{GoetzeVenker}}]\label{ethrm1}
 Let $h$ be a real analytic and even Schwartz function. Then there
exists a constant $\a^h\geq 0$ such that for all real analytic, strictly convex and even $Q$ with $\a_Q> \a^h$, the following
holds:\\
There exists a compactly supported probability measure $\mu_Q^h$ having a non-zero and continuous density on the interior of its
support and for
$k=1,2,\dots$, the $k$-th correlation measure of $P_{N,Q}^h$ converges weakly to the $k$-fold product of $\mu_Q^h$,
that is for any
bounded and continuous function $\map{g}{\R^k}{\R}$,
\begin{align}
\lim_{N\to \infty}\int g(t_1,\dots,t_k)\, \rho_{N,Q}^{h,k}(t_1,\dots,t_k)\, d^kt=\int g(t_1,\dots,t_k)\, d \mu_Q^h(t_1)\dots 
d\mu_Q^h(t_k).
\end{align}
\end{thrm}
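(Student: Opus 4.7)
I would follow the standard potential-theoretic (log-gas) approach, adapted to the perturbation of the Vandermonde interaction by $\exp(-\sum_{i<j}h(x_i-x_j))$. Writing
\[
P_{N,Q}^h(x) = \frac{1}{Z_{N,Q}^h}\exp\lb -N^2 \mathcal{I}^h_Q(L_N) + O(N\log N)\rb,
\]
where $L_N := N^{-1}\sum_j \delta_{x_j}$ is the empirical measure and
\[
\mathcal{I}^h_Q(\mu) := \int Q \, d\mu - \iint \log\lv s-t\rv \, d\mu(s)d\mu(t) + \tfrac{1}{2}\iint h(s-t)\, d\mu(s)d\mu(t),
\]
the candidate for the limit is the minimizer of $\mathcal{I}^h_Q$ on $\M(\R)$.

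Existence of a minimizer $\mu^h_Q$ is standard: $\mathcal{I}^h_Q$ is weakly lower semicontinuous (the log-energy is l.s.c., $\int Q\,d\mu$ is l.s.c.\ by continuity of $Q$, and the $h$-quadratic form is continuous since $h$ is bounded), and the growth of $Q$ at infinity yields tightness of minimizing sequences. A compactly supported $\mu^h_Q$ results, with support and regularity (continuous non-zero density in the interior) read off from the Euler-Lagrange equation
\[
Q(x) - 2\int \log\lv x-y\rv\, d\mu^h_Q(y) + \int h(x-y)\, d\mu^h_Q(y) = \ell \quad\text{on }\supp(\mu^h_Q),
\]
together with real-analyticity of $Q$ and $h$.

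The main obstacle is uniqueness of $\mu^h_Q$. For two minimizers $\mu_1,\mu_2$, with $\sigma := \mu_1-\mu_2$, the midpoint identity
\[
\mathcal{I}^h_Q(\mu_1) + \mathcal{I}^h_Q(\mu_2) - 2 \mathcal{I}^h_Q\lb\tfrac{\mu_1+\mu_2}{2}\rb = \tfrac12 \iint \log \lv s-t\rv^{-1}\, d\sigma d\sigma + \tfrac14 \iint h(s-t)\, d\sigma d\sigma
\]
combines a strictly positive log-term (positive-definiteness of the log-kernel on mean-zero signed measures) with an $h$-term of indefinite sign. Passing to Fourier variables, $\iint \log\lv s-t\rv^{-1}\,d\sigma d\sigma = \pi\int|\hat\sigma(\xi)|^2 |\xi|^{-1}d\xi$ and $\iint h(s-t)\,d\sigma d\sigma = \int \hat h(\xi)|\hat\sigma(\xi)|^2 d\xi$, so the $h$-contribution is absorbed by the log-term at the cost of a constant $\a^h \ge 0$ depending only on the Schwartz norms of $h$. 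For $\a_Q>\a^h$, the coercivity of $\mathcal{I}^h_Q$ forces minimizers into a class on which the residual midpoint difference is strictly positive whenever $\sigma \ne 0$, yielding $\mu_1=\mu_2$.

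Finally, uniqueness and compactness of $\supp(\mu^h_Q)$ yield exponential concentration of $L_N$ at speed $N^2$ around $\mu^h_Q$ by the standard two-sided Laplace argument of Ben Arous-Guionnet type (weak-topology covering for the upper bound; Riemann-sum comparison after regularizing $\mu^h_Q$ for the lower bound). This gives weak convergence of $\rho_{N,Q}^{h,1}$ to $\mu^h_Q$. For $k\ge 2$, exchangeability of the particles and concentration of $L_N$ give, for bounded continuous $g_1,\dots,g_k$,
\[
\int g_1\otimes\cdots\otimes g_k\, d\rho_{N,Q}^{h,k} = \E\lr \prod_{j=1}^k \int g_j\, dL_N \rr + O(N^{-1}) \longrightarrow \prod_{j=1}^k \int g_j\, d\mu^h_Q,
\]
the $O(N^{-1})$ accounting for the ordered-versus-unordered $k$-tuple correction in the product expansion.
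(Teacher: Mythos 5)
Your plan follows the \cite{BPS}-style energy-minimization / large-deviations route, whereas the theorem as cited is proved in \cite{GoetzeVenker} by a completely different mechanism: a fixed-point argument for the self-consistency equation $\mu = \mu_{V_\mu}$ with $V_\mu = Q + h_\mu$ (cf.\ Section \ref{Sec3} of this paper), followed by the stochastic linearization that reduces $P_{N,Q}^h$ to an averaged determinantal ensemble $P_{N,V_\mu}$. Uniqueness of $\mu$ is obtained a posteriori: one shows that \emph{every} fixed-point measure is the weak limit of the correlation measures, and uniqueness of weak limits then forces uniqueness of the fixed point. The convexity hypothesis $\a_Q > \a^h$ enters there only to establish the concentration bound \eqref{concentration_U} on $\E_{N,V}e^{\U}$; it is not used to convexify an energy functional.

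This difference is not merely stylistic: it exposes a genuine gap in your uniqueness step. In your midpoint identity the term $\int Q\,d\mu$ is linear in $\mu$ and cancels exactly, so the residual quadratic form
\begin{align*}
\tfrac12 \iint \log\lv s-t\rv^{-1} d\s\, d\s \;+\; \tfrac14 \iint h(s-t)\, d\s\, d\s
\;=\; \int \Big( \tfrac{\pi}{2\lv \xi\rv} + \tfrac14 \hat h(\xi) \Big) \lv \hat\s(\xi)\rv^2 \, d\xi
\end{align*}
does not depend on $Q$ at all. Its positivity for $\s \neq 0$ requires $\hat h(\xi) \geq -2\pi/\lv\xi\rv$ pointwise, which is a condition on $h$ alone, not something that can be purchased by taking $\a_Q$ large. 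For a Schwartz $h$ with a sufficiently deep negative dip in $\hat h$ at moderate frequency, the form is indefinite and strict midpoint convexity fails, so ``coercivity of $\mathcal{I}^h_Q$'' cannot rescue the argument — coercivity confines the support and the moments of minimizers, but it does not make the $Q$-independent quadratic form positive on signed differences. This is precisely why \cite{BPS} impose a convexity condition on the interaction alone, while the hypothesis $\a_Q > \a^h$ of \cite{GoetzeVenker} (and of Theorem \ref{ethrm1}) allows a wider, and in general different, class of interactions; see Remark \ref{remark_thrm1}~c). Under the stated hypotheses your energy functional need not be strictly convex, the E--L equation may admit several critical points, and your proof of uniqueness (and hence of a well-defined limit in the LDP lower bound) does not close. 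A correct argument under $\a_Q > \a^h$ must, as in \cite{GoetzeVenker}, obtain the limit for each fixed-point measure and deduce uniqueness afterwards, or else strengthen the hypothesis on $h$ to a \cite{BPS}-type positivity condition.
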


\begin{remark}\label{remark_thrm1}\noindent
 \begin{itemize}
  \item[a)] If $h$ is positive-definite (i.e. with nonnegative Fourier transform), $\a^h$ can be chosen as 
$\a^h=\sup_{t\in\R}-h''(t)$ (cf. Remark \ref{remark_definiteness} and \cite[Remark after Theorem 1.1]{GoetzeVenker}).
\item[b)] It was shown in \cite[Sec. 3]{GoetzeVenker} that the measure $\mu_Q^h$ is the equilibrium measure with respect to 
the external field $V(x):=Q(x)+\int h(x-t) d\mu_Q^h(t)$. This fact leads to a useful representation of $\mu_Q^h$ (see e.g. 
Appendix \ref{Sec2}). We note for later reference that the evenness of $Q$ and $h$ carries over to $\mu_Q^h$ (cf. 
\cite[Theorem 1.1]{GoetzeVenker}) and hence to $V$.

\item[c)] In \cite{BPS}, ensembles with many-body interactions are considered and global asymptotics
 but not local correlations are derived (see also the discussion on page 326 in \cite{Totik}). In the case of pair 
interactions, the classes of admissible 
interactions in \cite{BPS}
and
in this paper are different. In \cite{BPS}, a convexity condition is posed, depending solely on the additional interaction
potentials where our conditions depend on both $Q$ and $h$. The characterization of the limiting measure is different, too.

Asymptotic expansions of the partition functions can be found in \cite{Borot2}. Further global asymptotics for 
repulsive particle systems have been studied in \cite{Ch1,Ch2}.
 \end{itemize}

\end{remark}

On the local scale, there are basically two different regimes of interest. The local regime in the bulk of the particles has 
been
investigated in \cite{GoetzeVenker}, yielding asymptotics in terms of the sine kernel. These results on the bulk have been extended 
in two ways. Firstly, in the recent \cite{SchubertVenker} the sine kernel asymptotics have been obtained under an unfolding of the 
particles, which is a non-linear rescaling that transforms the equilibrium measure to a uniform distribution. This in particular 
allows for an efficient treatment of empirical spacings between particles. Secondly, bulk universality has been shown in 
\cite{Venker12} for the $\b$-variant of \eqref{e3} which is obtained by replacing the exponent 2 in \eqref{e3} by some arbitrary 
$\b>0$.

Returning to $\b=2$, the
asymptotics at the edge are governed in terms of the Airy kernel. The Airy function $\map{\Ai}{\R}{\R}$ is uniquely determined by
the requirements that it is a solution to the differential equation $f''(t)=tf(t)$ and has the asymptotics $\Ai(t)\sim
(4\pi\sqrt{t})^{-1/2}e^{-\frac{2}{3}t^{3/2}}$ as $t\to\infty$. The Airy kernel $K_\Ai$ is defined on $\R^2$ and can be 
represented by
\begin{align}
 K_\Ai(t,s)=\int_0^\infty \Ai(t+r)\Ai(s+r)dr=\frac{\Ai(t)\Ai'(s)-\Ai'(t)\Ai(s)}{t-s},\label{def_KAi}
\end{align}
where the latter representation only holds for $t\not=s$.
The first main result of the present work is the edge universality of the correlation functions of $P_{N,Q}^h$. Here the 
universal aspect 
is that in the limit $N\to\infty$ the correlations do not depend on $Q$ and $h$ after an appropriate linear transformation 
$t\mapsto b+t/(c^* N^{2/3})$. The quantities $b$ and $c^*$ depend on $Q$ and $h$ through the function $V$ defined in Remark 
\ref{remark_thrm1} b) as follows. The number $b$ is the supremum of the support of the equilibrium measure $\mu_Q^h$ and $c^*$ is 
related to the prefactor of the square root vanishing of the density of $\mu_Q^h$ at $b$,
\begin{align}\label{def_c^*}
 c^*:=\dfrac{2^{-1/3}}{b}G_V(1)^{2/3},
\end{align}
where the function $G_V$ is defined in \eqref{functionG}.

\begin{thrm}\label{ethrm2}
 Let $h$ and $Q$ satisfy the assumptions of Theorem \ref{ethrm1} and assume furthermore that the Fourier transform $\hat{h}$ of $h$ decays
exponentially fast. Let
$q\in \R$ be given and let $b$ and $c^*$ be as above.
Then we have for $N\to\infty$
\begin{enumerate}
 \item \begin{align}
&\lb\frac{N^{1/3}}{c^*}\rb^k\rho_{N,Q}^{h,k}\lb
b+\frac{t_1}{c^*N^{2/3}},\dots,b+\frac{t_k}{c^*N^{2/3}}\rb=\det\left[K_\textup{Ai}(t_i,t_j)\right]_{1\leq i,j\leq 
k}+o(1)\label{Ttheorem3}                                                  
\end{align}
with the $o(1)$ term being uniform in $t\in[q,\infty)^k$. 
\item If $h$ is negative-definite, i.e. $\hat{h}\leq0$, then for any $0<\s<1/3$ the $o(1)$ term in \eqref{Ttheorem3} can be replaced 
by $\O(N^{-\s})$, again uniformly in $t\in[q,\infty)^k$.
\end{enumerate}
\end{thrm}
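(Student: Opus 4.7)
The plan is to reduce Theorem \ref{ethrm2} to the determinantal setting via the stochastic linearization procedure of \cite{GoetzeVenker} and then to invoke the uniform Christoffel--Darboux asymptotics of \cite{KSSV}. The first step is to use the Fourier representation of $h$ together with a Hubbard--Stratonovich-type transformation to rewrite the two-body interaction as an integral over an auxiliary random field $\omega$, producing an identity of the form
\begin{align*}
P_{N,Q}^h(x) \;=\; \int P_{N,V_\omega}(x)\,dW(\omega),
\end{align*}
where $V_\omega$ is a small, real-analytic perturbation of a fixed effective external field, $P_{N,V_\omega}$ is the determinantal ensemble \eqref{e1} with that external field, and $W$ is a reference measure on $\omega$ built from $\hat h$. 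The exponential decay of $\hat h$ ensures smoothness of the perturbation for typical $\omega$. The sign of $\hat h$ plays a decisive role: only when $\hat h\le 0$ is $W$ a bona fide Gaussian probability measure, and it is this stronger regularity that drives the sharpened estimate in part (2).

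Combining this representation with the determinantal identity \eqref{determinantal_relations} for each $V_\omega$ gives
\begin{align*}
\frac{N!}{(N-k)!}\,\rho_{N,Q}^{h,k}(x_1,\dots,x_k) \;=\; \int \det\bigl[K_{N,V_\omega}(x_i,x_j)\bigr]_{1\le i,j\le k}\,dW(\omega).
\end{align*}
For each fixed $\omega$ the rescaled kernel $K_{N,V_\omega}$ converges at the upper edge $b_\omega$ of the associated equilibrium measure, on the scale $1/(c^*_\omega N^{2/3})$, to the Airy kernel; standard Riemann--Hilbert methods handle a single real-analytic external field, and the central technical input from \cite{KSSV} is that these asymptotics, together with their error terms uniform on half-lines $[q,\infty)$, hold \emph{uniformly} over the perturbative family $\{V_\omega\}$.

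The essential obstacle is that $b_\omega$ and $c^*_\omega$ depend on $\omega$, and at the edge scale $N^{-2/3}$ the Airy determinant is extremely sensitive to such shifts: a change in $b_\omega$ of typical size $N^{-1/2}$ would translate to a displacement of order $N^{1/6}$ in the rescaled coordinate. The strategy is to expand $b_\omega$ and $c^*_\omega$ around the reference values $b$ and $c^*$ corresponding to the equilibrium problem for $V=Q+\int h(\cdot-t)\,d\mu_Q^h(t)$ of Remark \ref{remark_thrm1} b); using the evenness of $Q$, $h$, and $W$ one argues that the linear-in-$\omega$ contribution averages to zero, so that only quadratic fluctuations matter, while the convexity assumption $\alpha_Q>\alpha^h$ of Theorem \ref{ethrm1} supplies a uniform spectral gap for the perturbed equilibrium problems and the tail integrability needed to control the averaging. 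This yields \eqref{Ttheorem3} with an $o(1)$ error, proving part (1). For part (2), $W$ is a true Gaussian, permitting sharp concentration bounds; balancing the $N^{-1/2}$ typical size of $\omega$ against the $N^{-2/3}$ edge scale produces $\mathcal O(N^{-\sigma})$ for every $\sigma<1/3$, the critical exponent $1/3$ being dictated precisely by this competition.
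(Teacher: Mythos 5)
Your high-level strategy matches the paper's: represent $P_{N,Q}^h$ as an average of determinantal ensembles via a Gaussian linearization of the two-body term $\U$, then invoke the uniform Christoffel--Darboux asymptotics of \cite{KSSV} for families of perturbed weights. However, there is a substantive error in the magnitudes, and a key step for part a) is missing.

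The perturbation after linearization is $V \mapsto V - f/N$ with a centered Gaussian process $f$ of \emph{order one} (it is stationary with covariance $-h$); the endpoint shift is therefore $b_{V-f/N} - b_V = \O(\|f\|/N)$, which for $\|f\|_D \le N^\kappa$ (the threshold used to truncate the Gaussian) gives an endpoint shift $\O(N^{\kappa-1})$, i.e. a displacement $\O(N^{\kappa - 1/3})$ in the $N^{2/3}$-rescaled variable. Your claim that $b_\omega - b$ is of typical size $N^{-1/2}$ (giving an order-$N^{1/6}$ rescaled displacement) is off by a factor, and it changes the whole nature of the proof: with the true $\O(1/N)$ perturbation there is no need for any first-order cancellation, and the paper simply absorbs the $f$-dependence of the endpoint and scaling into the error term of Proposition \ref{keyproposition}, which is stated directly in the unperturbed variables $b_V$, $\g_V$. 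Moreover, the ``linear-in-$\omega$ term averages to zero by evenness'' argument you sketch would not repair the hypothetical $N^{1/6}$ displacement even in principle: the rescaled kernel depends on the edge position through Airy-type exponentials $e^{-\frac{2}{3}t^{3/2}}$, which are not amenable to a polynomial expansion over a window of size $N^{1/6}$, and the residual after first order would not be small. The exponent $1/3$ in part (2) is not a balance between $N^{-1/2}$ and $N^{-2/3}$; it is the error exponent in the edge asymptotics of the Christoffel--Darboux kernel ($\O(N^{\kappa-1/3})$ with $\kappa>0$ arbitrarily small), combined with a sub-Gaussian tail bound (Borell's inequality) on $\|f\|_D$ to control the bad event.

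For part a), when $h$ is not negative-definite, $-h$ is not a covariance function and there is no honest Gaussian probability measure $W$ in your representation; you acknowledge this but do not describe the fix. The paper decomposes $h = h^+ - h^-$ into positive-definite pieces (using the exponential decay of $\hat h$), introduces a complex parameter $z$ in $\U_z$ with $-h_z = z h^+ + h^-$, proves the $z \in [0,1]$ version of the statement via the Gaussian linearization, establishes uniform boundedness on $\{\Re z < 1\}$ via $\Re \U_z \le \U_1$, and then invokes Vitali's theorem to reach $z = -1$. Without some version of this analytic continuation (or an alternative argument that you do not supply), part a) does not follow from the linearization alone.
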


\begin{remark}\label{remark_definiteness_vorne}\noindent
 \begin{enumerate}
  \item  Even in the situation of part b) where we have better bounds on the rate of convergence, the 
Airy kernel determinant is the leading term in \eqref{Ttheorem3} only for $t\in[q,o((\log 
N)^{2/3})]^k$.
\item The condition of exponential decay of the Fourier transform of $h$ allows for writing $h$ as difference of positive-definite 
and real-analytic functions. This technical condition is needed and explained in Section \ref{proof_Theorem_2}. In particular, if 
$h$ is positive-definite or negative-definite, the assumption on the decay of the Fourier transform is automatically satisfied 
\cite{LukacsSzasz}.
\item If in the situation of part a) of the previous theorem, $h$ is positive-definite, then $\a^h$ can be chosen as 
$\a^h=\sup_{t\in\R}-h''(t)$ (cf. Remark \ref{remark_thrm1}). This remark extends to all of our results.
The roles of the different conditions on the definiteness of $h$ are explained in Remark \ref{remark_definiteness} below.
\item 
Edge universality has been established for a wide variety of random matrix ensembles. For $\b$-variants of \eqref{e1} with 
one-cut support we refer the reader to the recent works \cite{KRV}, \cite{Bourgadeetal13} and 
\cite{BFG}. It should be noted that the limit laws depend on the value of $\b$ \cite{RRV}. For results on invariant matrix ensembles and on 
Wigner matrices we refer to the books and recent surveys \cite{DeiftGioev, PasturShcherbina, TaoVuSurvey, ErdosSurvey} and 
references therein.
 \end{enumerate}
\end{remark}

We now turn to the largest particle $x_{\max}$ and provide an extensive description of its distributional properties. 
In analogy to the classical central limit theorem for independent variables, we prove a limit theorem on the fluctuations of 
$x_{\max}$ around the rightmost endpoint $b$ (Theorem \ref{ethrm3}). It also implies the weak law of large numbers with respect to 
a natural product measure. The strong law of large numbers is also shown (Corollary \ref{a.s.convergence}). Special 
emphasis is given to the study of the upper tail of the distribution function of $x_{\max}$. Strong results on moderate and 
large deviations are presented in Theorem \ref{thrm_MD} and 
Corollary \ref{corTW}. We already point out, that it is only in the regime of large deviations that the differences between the 
repulsive particle systems $P_{N,Q}^h$ and the determinantal ensembles become visible in the leading order behavior.

We start with the fluctuations of $x_{\max}$ around $b$. The limiting law is not Gaussian but is given by the Tracy-Widom 
distribution. Its 
distribution function $F_2$ can be defined as a Fredholm determinant implying in particular the representation
\begin{align}
 F_2(s)=1+\sum_{k=1}^\infty\frac{(-1)^k}{k!}\int_{(s,\infty)^k}\det\left(K_{\Ai}(t_i,t_j)\right)_{i,j\leq k}dt.\label{F_2_rep}
\end{align}
It can also be expressed as 
\begin{align*}
 F_2(s)=\exp\{-\int_s^\infty(t-s)q(t)^2dt\},
\end{align*}
where $q$ is the solution of the differential equation $q''(t)=tq(t)+2q(t)^3$ which has the asymptotics $q(t)\sim\Ai(t)$ as
$t\to\infty$ \cite{TracyWidom94}. For later use we note that the large-$s$-asymptotics of $F_2(s)$ are known (see e.g. \cite[(1), 
(25)]{BBDF}) to be
\begin{align}
1-F_2(s)=\frac{1}{16\pi}\frac{e^{-\frac43s^{3/2}}}{s^{3/2}}\left(1+\O\left(\frac{1}{s^{3/2}}\right)\right) ,
\quad \text{for} \ \ s \to \infty .\label{TW_asymptotics}
\end{align}

\begin{thrm}\label{ethrm3}

 Let $x_{\max}$ denote the largest component of a vector $x$. Under the conditions and with the notation of Theorem \ref{ethrm2} we
have for $N\to\infty$
\begin{enumerate}
 \item 
\begin{align}
P_{N,Q}^h\lb \lb x_{\max}-b\rb c^*N^{2/3}\leq s\rb=F_2(s)+o(1)\label{TW1}
\end{align}
uniformly for $s\in\R$, where $F_2$ is the distribution function of the ($\b=2$) Tracy-Widom distribution.
\item If $h$ is negative-definite, then the $o(1)$ term in \eqref{TW1} can be replaced by $\O(N^{-\s})$, where $0<\s<1/3$. 
The $\O$ term is uniform for $s\in[q,\infty)$ with arbitrary $q\in\R$.
\end{enumerate}
\end{thrm}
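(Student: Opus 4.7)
The natural starting point is to convert the distributional statement into one about correlation functions. By inclusion--exclusion applied to the event $\{x_{\max}\leq t\}=\{\#\{i:x_i>t\}=0\}$, one has
\[
 P_{N,Q}^h(x_{\max}\leq t)=\sum_{k=0}^{N}\frac{(-1)^k}{k!}\int_{(t,\infty)^k}\frac{N!}{(N-k)!}\,\rho_{N,Q}^{h,k}(y_1,\ldots,y_k)\,dy.
\]
Setting $t=b+s/(c^*N^{2/3})$ and rescaling $y_i=b+\tau_i/(c^*N^{2/3})$, the $k$-th integrand becomes
\[
\frac{N!}{(N-k)!\,N^k}\Bigl(\tfrac{N^{1/3}}{c^*}\Bigr)^k\rho_{N,Q}^{h,k}\Bigl(b+\tfrac{\tau_1}{c^*N^{2/3}},\ldots,b+\tfrac{\tau_k}{c^*N^{2/3}}\Bigr),
\]
which by Theorem~\ref{ethrm2} converges to $\det\bigl(K_{\Ai}(\tau_i,\tau_j)\bigr)_{i,j\leq k}$ uniformly on $[q,\infty)^k$ for every $q\in\R$. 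Summed over $k$, this formally reproduces the Fredholm series \eqref{F_2_rep} of $F_2(s)$.

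The substantive task is to justify this termwise passage to the limit uniformly in $s\in\R$. I would split the range of $s$ using a large parameter $M$. In the window $|s|\leq M$, truncate the series at level $K$, apply Theorem~\ref{ethrm2} to each of the first $K$ terms, and control the tail $k>K$ by a Hadamard-type bound: since $K_{\Ai}$ is a projection kernel, its principal minors are dominated by products of diagonal entries, so the $k$-th integral is bounded by $\bigl(\int_{-M}^{\infty}K_{\Ai}(\tau,\tau)d\tau\bigr)^k/k!$, which is summable uniformly in $s\in[-M,M]$. The corresponding a priori bound on $\rho_{N,Q}^{h,k}$ itself should come via the stochastic linearization of \cite{GoetzeVenker}: after representing $P_{N,Q}^h$ as a mixture of determinantal ensembles $P_{N,V_w}$, the conditional correlation functions enjoy the same projection-kernel inequality, with uniform-in-$w$ control provided by \cite{KSSV}. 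In the upper range $s\geq M$, both $1-F_2(s)$ (via \eqref{TW_asymptotics}) and $P_{N,Q}^h(x_{\max}>b+s/(c^*N^{2/3}))\leq \E[\#\{i:x_i>b+s/(c^*N^{2/3})\}]$ (via the $k=1$ case of Theorem~\ref{ethrm2}) can be made smaller than any prescribed $\epsilon$ by taking $M$ large, uniformly in $N$.

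The main obstacle is the lower range $s\leq -M$, where one must show that $P_{N,Q}^h(x_{\max}\leq b+s/(c^*N^{2/3}))$ itself is small uniformly in $N$. Markov's inequality is unavailable here, so I would again invoke the stochastic linearization: for each frozen weight $w$, $P_{N,V_w}(x_{\max}\leq t)$ is a genuine Fredholm determinant $\det(I-K_{N,V_w}|_{(t,\infty)})$, to which the uniform Christoffel--Darboux asymptotics of \cite{KSSV} apply; this delivers both the Tracy--Widom limit and sharp lower-tail bounds analogous to those obtained for ordinary unitary ensembles in \cite{DissSchueler,EKS}. Averaging over $w$ transfers the uniform smallness of the lower tail to $P_{N,Q}^h$, completing part~(a).

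For part~(b) the scheme is unchanged but quantitative. Theorem~\ref{ethrm2}(b) supplies an $\O(N^{-\s})$ error in each of the first $K$ terms; choosing $K=K(N)$ to diverge sufficiently slowly balances the aggregated truncation error $K\,\O(N^{-\s})$ against the super-exponentially small Airy tail in $k>K$, which for $s\geq q$ can be summed uniformly. Since the statement of~(b) restricts to $s\in[q,\infty)$, the lower-tail complication is avoided, and the upper-tail estimate already produces a rate compatible with any $\s<1/3$.
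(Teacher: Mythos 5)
Your approach expands $P_{N,Q}^h(x_{\max}\leq t)$ directly by inclusion--exclusion into a series over the correlation functions $\rho_{N,Q}^{h,k}$ and then invokes Theorem \ref{ethrm2} termwise. The paper explicitly warns against this order of operations at the start of its proof of Theorem \ref{ethrm3}: the error term in Theorem \ref{ethrm2} is obtained after a truncation to $[-L,L]^N$ whose threshold $L$ depends on the order $k$ of the correlation function, so the $o(1)$ (respectively the $\O(N^{-\s})$) term there depends on $k$ in a non-obvious way. Your sketch does not address this $k$-dependence, and the crude a priori bound that the linearization provides for the rescaled $\rho_{N,Q}^{h,k}$ (cf. \eqref{e15}) is of the form $\E\big[(Ce^{2\|f\|}+1)^k\big]$; with sub-Gaussian $\|f\|$ these moments grow too fast in $k$ to be dominated by the $1/k!$ factor of the Fredholm series. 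The paper avoids this entirely by reversing the order: first truncate the whole ensemble to a fixed $[-L,L]^N$ (independent of any $k$), then linearize, and only then expand each \emph{determinantal} gap probability $P_{N,V,f;L}(x_{\max}\leq t)$ as a Fredholm determinant $\Delta(\hat{\mathcal{K}}_{N,V,f;L})$, which is compared to $\Delta(\mathcal{K}_{\Ai;L})$ pathwise in $f$ via the AGZ inequality \eqref{Fredholm_inequ}, with the $f$-average taken at the very end. For each frozen $f$ the positive semi-definite kernel structure gives Hadamard bounds uniform in the order $k$; that is exactly the ingredient missing from your version.

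Two smaller points. First, your treatment of the range $s\leq -M$ is far heavier than needed: once one has uniform convergence on every $[q,\infty)$ (with limits that are monotone distribution functions) and continuity of $F_2$, uniform convergence on all of $\R$ follows automatically, which is how the paper dispenses with this in one sentence. Second, the dominated-convergence argument you invoke inside the linearization requires uniform-in-$f$ control from \cite{KSSV}, but that control only holds on $\{\|f\|_D\leq N^\kappa\}$; the complementary event has to be handled separately via the sub-Gaussian tail bound \eqref{Gaussian_LD}, a step you gloss over, and which again interacts badly with an unbounded sum over $k$ unless the Fredholm expansion is taken after freezing $f$.
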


Note that Theorem \ref{ethrm3} implies the weak law of large numbers w.r.t.~the product measure 
$\mathbb{P}:=\bigotimes_{N=1}^\infty P_{N,Q}^h$ (on an appropriate measurable space), i.e. $x_{\max}$ converges in probability to 
$b$. The strong law, i.e. almost sure convergence, is the content of Corollary \ref{a.s.convergence} below.

Theorem \ref{ethrm3} can be viewed as an analogue of the central limit theorem for the distribution of the largest
particle. This point of view leads naturally to the question of what can be said about its large and moderate deviations? There one 
is interested, roughly speaking, in describing the decay of the probabilities $P_{N,Q}^h\lb \lb x_{\max}-b\rb c^*N^{2/3}\leq - s\rb$ 
(lower tail) and $P_{N,Q}^h\lb \lb x_{\max}-b\rb c^*N^{2/3} > s\rb$ (upper tail) when $s$ is allowed to grow to infinity at some 
$N$-dependent rate. In this paper we deal exclusively with the upper tail. Note that even for determinantal ensembles the lower 
tail has not been studied at the level of detail required for our analysis.

The large (and moderate) deviations theory generally deals with 
probabilities decaying at an exponential rate. In a general setting with probability measure $P$ and random variables $X_N$, the 
deviation $s$ is often parametrized as $s=rN^\a$ for some $r, \a>0$ and one proves statements like 
\begin{align}
 \lim_{N\to\infty}\frac{\log P(X_N>rN^\a)}{N^\g}=-I(r).\label{logarithmic_deviations}
\end{align}
Then $N^\g$ is called the speed and $I(r)$ the rate function. In our situation $P$ corresponds to $\mathbb{P}:=\bigotimes_{N=1}^\infty P_{N,Q}^h$ and $X_N$ corresponds to $\lb x_{\max}-b\rb c^*N^{2/3}$ with $x \in \R^N$. The regime of moderate deviations extends over $0 < \a < 2/3$ and the regime of large deviations is associated with $\a = 2/3$. Note that Theorem \ref{ethrm3} does not provide any information on either the speed or on the rate function for any $\a > 0$ due to the fast decay of $1 - F_2$ described in \eqref{TW_asymptotics}.

The next theorem and its two subsequent corollaries remedy this situation. E.g.
in the case of moderate deviations Corollary \ref{corLogarithm} b) shows for all $0 < \a < 2/3$ that we have speed $N^{3\a /2}$ and rate function $I(r)=\frac{4}{3} r^{3/2}$. For large deviations $(\a = 2/3)$ we learn from Corollary \ref{corLogarithm} a) that the speed is $N$ and the rate function $I(r)=\eta_V(1 + r/(b c^*))$ with $\eta_V$ defined in \eqref{functioneta}. Observe that in the moderate regime the rate function is universal, whereas the rate function depends on $V$ and is hence non-universal for large deviations.

The situation becomes more subtle, if one studies the asymptotics of the tail probabilities and not just their logarithms. 
These provide finer information about the tails and we show how the $V$-dependence of the leading order behavior becomes 
stronger within the region of moderate deviations (Remark \ref{remarkUniversality}).

In recent years superlarge deviations have been studied as well, which would correspond to $\a > 2/3$. We will not 
treat them here. See Remark \ref{remarkcomparisonSchueler} for a description of the difficulties that appear in that regime.

We now state Theorem \ref{thrm_MD} that summarizes our results on moderate and large deviations 
for the upper tail of the largest particle. It is stated for $P_{N,Q}^h(x_{\max} > t)$ and hence $s$ in the 
discussion above corresponds to $t = b + s/(c^* N^{2/3})$. In this scaling the regime of large deviations is described (more 
naturally as for $s$) by those values of $t$ for which $t-b$ is of order $1$ and the regime of moderate deviations reads $N^{-2/3 } 
<< t-b << 1$. 
Theorem \ref{thrm_MD} provides upper and lower bounds on $P_{N,Q}^h(x_{\max} > t)$ 
for $t$ belonging to an arbitrary but fixed bounded subset of $(b +  N^{-2/3}, \infty)$. 

In the regime of moderate deviations the quotient of the upper and lower bound is seen to converge to $1$ as 
$N \to \infty$ and thus the leading order behavior of $P_{N,Q}^h(x_{\max} > t)$ is determined. It is given by
\begin{align}\label{functionFNV}
\FNV(t) := \frac{b^2e^{-N\eta_V(t/b)}}{4\pi N(t^2-b^2)\eta_V'(t/b)}.
\end{align}
The $V$-dependence is contained in the function 
$\map{\eta_V}{[1,\infty)}{\R}$,
\begin{align}
\eta_V(t) := \int_1^t\sqrt{s^2-1}~G_V(s)ds	 ,\label{functioneta}
\end{align}
where the function $G_V$ is defined in \eqref{functionG} below for the external field $V$ introduced in Remark \ref{remark_thrm1} 
above.
In the regime of large deviations, our result is weaker. There we show that the quotient of $\FNV(t)$ and $P_{N,Q}^h(x_{\max} > t)$ 
and its inverse are both bounded.

\begin{thrm}\label{thrm_MD}
Assume that the conditions of Theorem 
\ref{ethrm2} are satisfied and
fix $T > b$. Then the following holds:
\begin{enumerate}
\item There exists a constant $B > 1$ such that for all $N$ and all $t \in (b + N^{-2/3}, T)$ we have
\begin{align}\label{deviations_lower}
\frac{1}{B} \FNV(t) \leq P_{N,Q}^h(x_{\max} > t) \leq B \FNV(t).
\end{align} 
 \item For any sequences $(p_N)_N$, $(q_N)_N$ of reals with $p_N < q_N$,  $N^{2/3}(p_N -b) \to \infty$, and $q_N -b \to 0$ for $N \to \infty$ we have 
\begin{align}\label{deviations_general}
 P_{N,Q}^h\lb  x_{\max} > t\rb=\FNV(t)  [ 1+ o(1) ]
\end{align}
uniformly in $t \in (p_N, q_N)$ as $N \to \infty$.
\item If $h$ is negative-definite, then $o(1)$ in statement b)  can be replaced by the more precise
\begin{align}\label{E6b}
\O\lb
\frac{1}{N\lv t-b\rv^{3/2}}\rb +\O\lb \sqrt{t-b} \rb
\end{align}
with the $\O$ terms being uniform in $N$ and in $t \in (b + N^{-2/3}, T)$. 
\end{enumerate}

\end{thrm}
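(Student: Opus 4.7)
My plan is to exploit the stochastic linearization of \cite{GoetzeVenker}, which together with the Fourier hypothesis on $h$ allows $P_{N,Q}^h$ to be represented as an average of determinantal ensembles. Decomposing $h$ via the Fourier inversion formula and applying Bochner's theorem (the decay of $\hat h$ ensuring that the decomposition is well-behaved) one obtains an identity of the schematic form
\begin{align*}
P_{N,Q}^h(A) \;=\; \frac{1}{Z_{N,Q}^h}\int c_N(\psi)\,Z_{N,V_\psi}\,P_{N,V_\psi}(A)\,d\mu(\psi),
\end{align*}
valid for measurable $A\subseteq\R^N$, where $V_\psi = V + \psi/N$ is a small perturbation of the effective potential $V$ of Remark \ref{remark_thrm1}\,b), $P_{N,V_\psi}$ is a unitary invariant determinantal ensemble of the form \eqref{e1} attached to $V_\psi$, and $\mu$ is an auxiliary (possibly signed) measure on perturbations $\psi$. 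Applied to $A=\{x_{\max}>t\}$, this reduces the problem to estimating $P_{N,V_\psi}(x_{\max}>t)$ uniformly in $\psi$ and then performing the Laplace-type integration over $\psi$.

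For the determinantal piece, the relation \eqref{determinantal_relations} yields the standard inclusion-exclusion bounds
\begin{align*}
\int_t^\infty K_{N,V_\psi}(x,x)\,dx - \frac12\lb\int_t^\infty K_{N,V_\psi}(x,x)\,dx\rb^2 \;\leq\; P_{N,V_\psi}(x_{\max}>t) \;\leq\; \int_t^\infty K_{N,V_\psi}(x,x)\,dx,
\end{align*}
so the central quantity is $\int_t^\infty K_{N,V_\psi}(x,x)\,dx$. Outside the droplet the CD diagonal decays at exponential rate $N\eta_{V_\psi}(x/b)$; the uniform-in-perturbation asymptotics from \cite{KSSV} produce an explicit expression for $K_{N,V_\psi}(x,x)$ involving $\eta_{V_\psi}'(x/b)$ and the square-root factor $\sqrt{(x/b)^2-1}$, with exactly the uniform control needed for the averaging step. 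A Laplace evaluation anchored at the endpoint $x=t$ converts this integral into a contribution whose leading order, after setting $\psi\equiv 0$, is precisely $\FNV(t)$.

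The third step is the integration in $\psi$. For a typical $\psi$ of sup-norm $O(\log N)$ the perturbation of $\eta_V(t/b)$ induced by $\psi/N$ is of order $1/N$, which translates into multiplicative factors of order $1$ in $e^{-N\eta_{V_\psi}(t/b)}$. In the moderate-deviation range $t-b=o(1)$ these factors are lower-order, the first-order $\psi$-dependence cancels between $Z_{N,V_\psi}/Z_{N,Q}^h$ and the kernel factor, and Gaussian concentration of $\psi$ collapses the $\mu$-integral to $1+o(1)$, yielding \eqref{deviations_general}. In the large-deviation range $t-b\asymp 1$ no such simplification is possible: the averaging produces a non-trivial $O(1)$ multiplicative constant, which is precisely why statement (a) only furnishes two-sided bounds. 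Part (c) exploits the fact that when $\hat h\leq 0$ the auxiliary measure $\mu$ can be taken in a simpler form with quantitative Gaussian tails, so the quantitative error terms from \cite{KSSV} can be propagated through the averaging to produce the two error terms of \eqref{E6b}: $\O(1/(N|t-b|^{3/2}))$ from the Laplace evaluation in $x$, and $\O(\sqrt{t-b})$ from the $\psi$-induced perturbation of $\eta_V$.

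The main obstacle, common to all three parts, is the interplay between the $\psi$-averaging and the CD asymptotics: the diagonal kernel $K_{N,V_\psi}(x,x)$ has to be controlled uniformly as $\psi$ ranges over a set of near-full $\mu$-mass \emph{and} as $t$ sweeps across the entire range $(b+N^{-2/3},T)$ that straddles the Airy window $t-b\asymp N^{-2/3}$ and the large-deviation plateau $t-b\asymp 1$. The uniform kernel asymptotics of \cite{KSSV} for perturbative weight families are tailored precisely to this requirement and form the technical backbone; the remaining work lies in isolating cancellations in the moderate regime and in handling, without loss of polynomial factors in $N$, the $\psi$-average of exponentially small quantities in the large-deviation regime.
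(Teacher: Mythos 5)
Your high-level plan is essentially the paper's: stochastically linearize $P_{N,Q}^h$ into an average over determinantal ensembles $P_{N,V-f/N}$, feed in the diagonal-kernel asymptotics from \cite{KSSV} (which are exactly Proposition \ref{prop_LD} in the appendix and which do produce the factor $\FNV(t)$ via a Laplace evaluation of $\int_t^L K_{N,V,f;L}(y,y)\,dy$), and then integrate over the Gaussian field. The two error terms in \eqref{E6b} come from where you say they do.

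There are, however, two genuine gaps. First, your treatment of general (non-negative-definite) $h$ via a ``possibly signed'' averaging measure does not survive contact with the rest of your argument. The later steps---Gaussian concentration of $\psi$, sub-Gaussian tails controlling the $\mathrm{e}^{\O(\sqrt{t-b}\|\psi\|)}$ factors, bounding the average of the $\psi$-integral by a dyadic Riemann--Stieltjes argument---all rely on the averaging measure being an honest probability measure supported on sample paths of a Gaussian process, and this requires $-h$ to be a covariance function, i.e.\ $\hat{h}\leq 0$. With a signed ``measure'' there is no concentration and no integration by parts in the sense you need. The paper sidesteps this by writing $h=h^+-h^-$ with both $h^\pm$ positive-definite, introducing a complex parameter $z$ through $\U_z$ (equations \eqref{e472}--\eqref{e482}), proving the asymptotics by genuine linearization for real $z\in(0,1]$, establishing uniform boundedness on $\{\Re z<1\}$ via the pointwise bound $\Re\,\U_z\leq\U_1$, and then invoking Vitali's theorem to extend to $z=-1$. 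Your proposal does not contain a workable substitute for this analytic-continuation step, and without it parts a) and b) are out of reach whenever $\hat{h}$ changes sign.

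Second, even in the negative-definite case your sketch gives the upper bound in part a) but not the lower bound. You observe, correctly, that in the large-deviation regime $t-b\asymp 1$ the $\psi$-averaging changes the multiplicative constant; but this only shows that the ratio $P_{N,Q}^h(x_{\max}>t)/\FNV(t)$ is not $1+o(1)$, not that it stays bounded away from $0$. The paper proves the lower bound by contradiction combined with the $z$-parametrization: it shows that if the ratio were to vanish along a subsequence $t_N$, then by monotonicity of $z\mapsto\U_z(x)$ and Vitali the corresponding ratios for $\U_z$ with $z\in(0,1)$ would also vanish, and then explicitly bounds the $z=1/2$ ratio away from $0$ using the sub-Gaussian tails of $F_{1/2}$ (defined in \eqref{p6b35}) and the positivity of $\inf_{\|f\|\leq\beta}P_{N,V,f;L}(x_{\max}>t)/\FNV(t)$. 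Some concrete argument of this type is needed; a bare appeal to ``no simplification possible'' is not a proof of a lower bound.

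A smaller point: the truncation of the ensemble to $[-L,L]^N$ (cf. \eqref{truncation_probability}) is not cosmetic; it is what makes the sup-norm $\|f\|_D$ of the Gaussian field a sub-Gaussian random variable (via Borell's inequality on a compact) and hence what powers the concentration in your ``third step.'' You should state this explicitly rather than leave it implicit.

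Your inclusion--exclusion bound on the Fredholm determinant is a viable alternative to the Hadamard-inequality route the paper uses in \eqref{p6b20}; both reduce to controlling $\int_t^L K_{N,V,f;L}(y,y)\,dy$, so that choice is not the issue.
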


Note that for negative-definite functions $h$ statement c) implies the upper bound in \eqref{deviations_lower} but not the 
lower bound.
\begin{remark}\label{remarkcomparisonSchueler}
 Theorem \ref{thrm_MD} should be compared with the result \cite[(1.14)]{DissSchueler} (cf. \cite{EKS}) for determinantal ensembles. 
Even our best 
error bounds in the case of negative-definite functions $h$ are worse than the bounds there, if $\lv t-b \rv >> N^{-1/2}$. This is 
due to the  $\O (\sqrt{t-b} )$ term  in \eqref{E6b}. We believe that this is not an artefact of our proof. In fact, relations 
\eqref{p6b5} and \eqref{p6b10} of the proof show that we can express the distribution of the largest particle by averaging 
determinantal ensembles with $V$  replaced by $V-f/N$ over $f$. Tracing back the origin of the $\O (\sqrt{t-b} )$ term  in 
\eqref{E6b} we see that it does not stem from a lack of control on error terms but on the dependence of the upper endpoint 
$b_{V-f/N}$ 
on $f$, that can actually be computed to first order.
In the regime of large deviations this effect is most prominent and this is the reason why we loose the convergence result for the 
distribution of the largest particle. However, we conjecture that also in the regime of large deviations we have convergence
\begin{align}\nonumber
\lim_{N\to\infty} P_{N,Q}^h\lb  x_{\max} > t\rb= B(t) \FNV(t)
\end{align}
for some strictly monotone increasing function $B$ with $B(t) > 1$. Observe that our previous results all show that 
$P_{N,Q}^h$ has the same limiting behavior as the determinantal ensemble $P_{N,V}$. However, for $P_{N,V}$ one has $B(t)=1$, i.e. 
we conjecture the breakdown of comparability between $P_{N,Q}^h$ and $P_{N,V}$ in this regime! To show this one would need to 
apply Laplace's method to the
infinite dimensional integral that represents the averaging over $f$. This is a technically demanding enterprise but not without hope 
due to the Gaussian character of the probability measure on $f$. We plan to come back to this problem in future work.

The situation becomes even more challenging in the superlarge regime, where the additional difficulty arises that one may not 
truncate $P_{N,Q}^h$ to some finite interval $[-L, L]^N$ that is used in the proof of Theorem \ref{thrm_MD} to obtain good bounds on 
the Gaussian field $f$ used for the linearization method (see e.g. discussion in the paragraph before \eqref{Gaussian_LD}). In 
contrast, for 
determinantal ensembles the leading order behavior of the upper tail $P_{N,V}\lb  x_{\max} > t\rb$ was shown to be given 
by $\FNV(t)$ in both the large and the superlarge regime for a suitable class of convex functions $V$ \cite[Theorem 
1.1]{DissSchueler} (cf. \cite{EKS}).

\end{remark}

Next we discuss the connection with the Tracy-Widom law $F_2$ beyond the central limit regime already covered in Theorem 
\ref{ethrm3}.
It is convenient to use the scaling of $x_{\max}$ as in the statement of Theorem \ref{ethrm3}, i.e. $t \equiv b + s / (c^* N^{2/3})$.
The positivity and real analyticity of $G_V$ (see \eqref{functionG} and thereafter), definitions \eqref{functioneta}, 
\eqref{functionFNV} and the definition of $c^*$ in \eqref{def_c^*} lead by a straightforward calculation to the following 
representation of $\FNV$ in the regime of moderate deviations, i.e $s/N^{2/3} = o(1)$:
\begin{align}\nonumber
\FNV(t(s)) &= \frac{e^{-N \eta_V (t(s)/b)}}{16 \pi s^{3/2}} \left[ 1 + \O \lb \frac{s}{N^{2/3}} 
\rb \right] \quad \text{with}\\
\label{expansion_eta}
N \eta_V (t(s)/b) &= \frac{4}{3}s^{3/2} + \sum_{j=1}^{\infty} d_{j, V} \frac{s^{j + \frac{3}{2}}}{N^{\frac{2}{3} j}} = 
 \frac{4}{3}s^{3/2} + \O \lb \frac{s^{5/2}}{N^{2/3}} \rb
\end{align}
for some sequence $(d_{j, V})_{j \geq 1}$ of real numbers depending on $V$. The authors would like to thank F.~G\"otze for 
pointing out that the power series
\begin{align*}
 \frac{4}{3}s^{3/2} + \sum_{j=1}^{\infty} d_{j, V} \frac{s^{j + \frac{3}{2}}}{N^{\frac{2}{3} j}}
\end{align*}
is an analogue to the Cram\'er series \cite{Cramer} in the deviations theory for sums of independent random 
variables. It follows that
\begin{align}\nonumber
e^{-N \eta_V (t(s)/b)}=e^{-\frac{4}{3}s^{3/2}}[1 + o(1)]
\end{align}
as $N \to \infty$ for $0 \leq s= o \lb N^{4/15} \rb$, or equivalently for $0 \leq t(s)-b= o \lb N^{-2/5} \rb$. Comparing with the 
asymptotics of the Tracy-Widom distribution \eqref{TW_asymptotics} and using in addition that 
\begin{align}\nonumber
\sqrt{\frac{s}{N^{2/3}}} = \O\left(\frac1{s^{3/2}}\right)+\O\left(\frac{s^{5/2}}{N^{2/3}}\right) \quad \text{for} \ \  s > 0 , 
\end{align}
we obtain from statements b) and c) of Theorem 
\ref{thrm_MD}:
\begin{cor}\label{corTW}
Under the condition and with the notation of Theorem 
\ref{thrm_MD} the following relations hold.
\begin{enumerate} 
 \item For any sequences $(\hat{p}_N)_N$, $(\hat{q}_N)_N$ of reals with $\hat{p}_N < \hat{q}_N$,  $\hat{p}_N \to \infty$, and 
$N^{-4/15} \hat{q}_N  \to 0$ for $N \to \infty$ we have: 
\begin{align}\nonumber
P_{N,Q}^h\lb  (x_{\max}-b)c^*N^{2/3} > s\rb=(1-F_2(s))(1+o(1)),
\end{align}
uniformly in $s \in (\hat{p}_N, \hat{q}_N)$ as $N \to \infty$.
\item If $h$ is negative-definite, then 
\begin{align}\nonumber
P_{N,Q}^h\lb  (x_{\max}-b)c^*N^{2/3} > s \rb=(1-F_2(s)) \left[1
+\O\left(\frac1{s^{3/2}}\right)+\O\left(\frac{s^{5/2}}{N^{2/3}}\right) \right]
\end{align}
with the $\O$ terms being uniform in $N$ and in $s \in (1 , N^{4/15})$.
\end{enumerate}
\end{cor}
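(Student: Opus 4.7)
The plan is to combine Theorem~\ref{thrm_MD} with the expansion~\eqref{expansion_eta} of $\FNV$ and the Tracy--Widom tail asymptotics~\eqref{TW_asymptotics}. Write $t(s) := b + s/(c^*N^{2/3})$, so that the event $\{(x_{\max}-b)c^*N^{2/3} > s\}$ is the same as $\{x_{\max} > t(s)\}$. Under the assumptions of the corollary we have $t(s) \in (b + N^{-2/3}, T)$ for any fixed $T > b$ and all large $N$, so Theorem~\ref{thrm_MD} applies.

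The first step is purely analytic. From~\eqref{expansion_eta}, $N\eta_V(t(s)/b) = \frac{4}{3}s^{3/2} + O(s^{5/2}/N^{2/3})$, and exponentiating (which is justified because the hypothesis $s = o(N^{4/15})$ is equivalent to $s^{5/2}/N^{2/3} = o(1)$) gives
\begin{align*}
 \FNV(t(s)) = \frac{e^{-\frac{4}{3}s^{3/2}}}{16\pi s^{3/2}}\bigl[1 + O(s^{5/2}/N^{2/3})\bigr].
\end{align*}
Comparing this with~\eqref{TW_asymptotics} yields
\begin{align*}
 \frac{\FNV(t(s))}{1 - F_2(s)} = 1 + O(s^{-3/2}) + O(s^{5/2}/N^{2/3}),
\end{align*}
valid for $s \geq 1$ with $s = o(N^{4/15})$.

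For part (a), I would apply Theorem~\ref{thrm_MD}~(b) with $p_N := t(\hat p_N)$ and $q_N := t(\hat q_N)$; the hypotheses on $\hat p_N$, $\hat q_N$ translate into $N^{2/3}(p_N-b) = \hat p_N/c^* \to \infty$ and $q_N - b \to 0$ (using $\hat q_N = o(N^{4/15}) = o(N^{2/3})$), so Theorem~\ref{thrm_MD}~(b) gives $P_{N,Q}^h(x_{\max} > t(s)) = \FNV(t(s))(1+o(1))$ uniformly for $s \in (\hat p_N, \hat q_N)$. On this range both $O(s^{-3/2})$ and $O(s^{5/2}/N^{2/3})$ are $o(1)$, and combining with the displayed ratio completes part~(a). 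For part~(b), I would apply Theorem~\ref{thrm_MD}~(c); under the scaling $t-b = s/(c^*N^{2/3})$ its error terms $O(1/(N|t-b|^{3/2}))$ and $O(\sqrt{t-b})$ become $O(s^{-3/2})$ and $O(s^{1/2}/N^{1/3})$, respectively. Using the elementary bound $s^{1/2}/N^{1/3} \leq s^{-3/2} + s^{5/2}/N^{2/3}$ (this is the inequality quoted just above the corollary statement, verified by multiplying through by $s^{3/2}$ and comparing $s^2/N^{1/3}$ with $1 + s^4/N^{1/3}$, which holds for both $s \leq 1$ and $s \geq 1$) one absorbs $O(s^{1/2}/N^{1/3})$ into the two terms announced in~(b) and concludes.

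There is no serious obstacle; the only delicate point is keeping track of the uniformity of the various $O$-terms and checking that the range $s \in (1, N^{4/15})$ is simultaneously compatible with Theorem~\ref{thrm_MD}~(c), with the exponentiation of the Cram\'er-type correction $O(s^{5/2}/N^{2/3})$ (which would fail outside this range), and with the absorption of $O(\sqrt{t-b})$ into the claimed error.
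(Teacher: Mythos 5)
Your proposal is correct and follows essentially the same route as the paper. The paper presents Corollary~\ref{corTW} as an immediate consequence of the discussion preceding it: the expansion~\eqref{expansion_eta} for $\FNV(t(s))$, the exponentiation of the Cram\'er-type series on the range $s = o(N^{4/15})$, the Tracy--Widom tail asymptotics~\eqref{TW_asymptotics}, the absorption inequality $\sqrt{s/N^{2/3}} = \O(s^{-3/2}) + \O(s^{5/2}/N^{2/3})$, and Theorem~\ref{thrm_MD}~(b), (c) — exactly the ingredients and order you use. (Minor typo only: after multiplying by $s^{3/2}$ the third term becomes $s^4/N^{2/3}$, not $s^4/N^{1/3}$, but the inequality and your verification are correct.)
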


\begin{remark}\label{remarkUniversality}
With Corollary \ref{corTW} we have identified the region for which the leading order of the upper tail of the distribution of the 
largest particle is universal, i.e. all of its $Q$ and $h$ dependence is encoded in the two numbers $b$ and $c^*$ that define the 
linear rescaling and play the same role as mean and variance in the Central Limit Theorem. Relations \eqref{expansion_eta} show 
nicely how the dependence of the leading order $\FNV(t)$ on $Q$ and $h$ grows gradually within the regime of moderate deviations. 
Set $\gamma_k := 2/(2k+5)$. For $0 < t -b = o \lb N^{-\g_k} \rb$, i.e. $s(t) = o\lb N^{-\g_k + 2/3} \rb$, the function $\FNV(t)$ 
depends to leading order on $Q$ and $h$ 
only through the $k+2$ numbers $b$, $c^*$, $d_{1,V}, \ldots, d_{k,V}$. This is a generic phenomenon already observed in \cite[Remark 
4.12]{DissSchueler} (cf. \cite{EKS}) for determinantal ensembles.
\end{remark}

In the logarithmic form, it
is straightforward to derive the following results from Theorem \ref{thrm_MD} and relations \eqref{expansion_eta}, 
\eqref{functionFNV}, \eqref{functioneta} and \eqref{functionG}. 
\begin{cor}\label{corLogarithm}
Under the condition and with the notation of Theorem 
\ref{thrm_MD} the following statements hold.
\begin{enumerate}
\item
\begin{align}\nonumber
\frac{\log P_{N,Q}^h(x_{\max} > t)}{N} = - \eta_V \lb t/b \rb - \frac{\log \lb N (t-b)^{3/2}\rb}{N} + \O \lb \frac{1}{N} \rb,
\end{align}
where the $\O$ term is uniform in  $N$ and in $t \in (b + N^{-2/3}, T)$.
\item
For any sequences $(\hat{p}_N)_N$, $(\hat{q}_N)_N$ of reals with $\hat{p}_N < \hat{q}_N$,  $\hat{p}_N \to \infty$, and $N^{-2/3} 
\hat{q}_N  \to 0$ for $N \to \infty$ we have
\begin{align}\nonumber
\frac{\log P_{N,Q}^h\lb  (x_{\max}-b)c^*N^{2/3} > s\rb}{s^{3/2}}= -\frac{4}{3} - \frac{\log (16 \pi s^{3/2})}{s^{3/2}} + o\lb \frac{1}{s^{3/2}} \rb + \O \lb  \frac{s}{N^{2/3}} \rb,
\end{align}
with error bounds that are uniform in $N$ and $s \in (\hat{p}_N, \hat{q}_N)$. If $h$ is negative-definite, then the $o \lb 
s^{-3/2} \rb$ term 
can 
be replaced by $\O \lb s^{-3} \rb $ that is also uniform in  $s \in (\hat{p}_N, \hat{q}_N)$.
\end{enumerate}
\end{cor}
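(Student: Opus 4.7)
The overall plan is to pass to logarithms in the tail estimates of Theorem \ref{thrm_MD} and then to expand $\log\FNV$ using the definition \eqref{functionFNV} for part a) and the Cram\'er-type expansion already recorded in \eqref{expansion_eta} for part b). Since the errors in Theorem \ref{thrm_MD} are multiplicative of the form $1+o(1)$, or bounded above and below by the fixed constant $B$, after taking logs they become additive of size $o(1)$ or $\O(1)$; dividing by $N$ or $s^{3/2}$ then produces precisely the remainder structure stated in the corollary.

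For part a) I would start from $B^{-1}\FNV(t) \leq P_{N,Q}^h(x_{\max}>t) \leq B\FNV(t)$, take logarithms to obtain $\log P_{N,Q}^h(x_{\max}>t) = \log\FNV(t) + \O(1)$, and expand
\[
\log\FNV(t) = -N\eta_V(t/b) - \log N - \log(4\pi) + 2\log b - \log(t^2-b^2) - \log\eta_V'(t/b).
\]
Using $\eta_V'(s) = \sqrt{s^2-1}\,G_V(s)$ one writes $\eta_V'(t/b) = b^{-1}\sqrt{(t-b)(t+b)}\,G_V(t/b)$. Since $t+b$ is comparable to $b$ and $G_V(t/b)$ is bounded above and below by positive constants on $t \in (b + N^{-2/3}, T)$, the combination $\log(t^2-b^2) + \log\eta_V'(t/b)$ reduces to $\tfrac{3}{2}\log(t-b) + \O(1)$ uniformly, and dividing by $N$ then yields a).

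For part b) I would use the substitution $t = b + s/(c^*N^{2/3})$. In the general case, Theorem \ref{thrm_MD}(b) gives $\log P_{N,Q}^h(x_{\max}>t) = \log\FNV(t) + o(1)$, hence an $o(s^{-3/2})$ remainder after division by $s^{3/2}$; the leading terms $-\tfrac{4}{3}$, $-\log(16\pi s^{3/2})/s^{3/2}$ and the remainder $\O(s/N^{2/3})$ then come directly from the representations $\FNV(t(s)) = e^{-N\eta_V(t(s)/b)}(16\pi s^{3/2})^{-1}[1 + \O(s/N^{2/3})]$ and $N\eta_V(t(s)/b) = \tfrac{4}{3}s^{3/2} + \O(s^{5/2}/N^{2/3})$ of \eqref{expansion_eta}. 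For the negative-definite case I would instead invoke Theorem \ref{thrm_MD}(c): the substitution transforms $1/(N|t-b|^{3/2})$ into $\O(s^{-3/2})$ and $\sqrt{t-b}$ into $\O(s^{1/2}N^{-1/3})$, so after dividing by $s^{3/2}$ these contribute $\O(s^{-3})$ and $\O(1/(sN^{1/3}))$ respectively.

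The proof is essentially one of careful bookkeeping. The only mildly subtle point occurs in the negative-definite case: the intermediate error $\O(1/(sN^{1/3}))$ has to be absorbed into $\O(s^{-3}) + \O(s/N^{2/3})$, which I would handle by a short case split on whether $s \leq N^{1/6}$ or $s > N^{1/6}$. For $s \leq N^{1/6}$ one has $s^2 \leq N^{1/3}$, so $1/(sN^{1/3}) \leq 1/s^3$; for $s > N^{1/6}$ one has $s^2 > N^{1/3}$, so $1/(sN^{1/3}) \leq s/N^{2/3}$. This disposes of the main obstacle, and the rest of the argument is routine.
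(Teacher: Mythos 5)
Your proposal is correct and follows exactly the route the paper has in mind: the paper explicitly says Corollary~\ref{corLogarithm} is ``straightforward to derive'' from Theorem~\ref{thrm_MD} together with \eqref{expansion_eta}, \eqref{functionFNV}, \eqref{functioneta}, \eqref{functionG}, and your calculation is precisely that derivation carried out in detail. The bookkeeping all checks: for a), writing $\eta_V'(t/b)=b^{-1}\sqrt{(t-b)(t+b)}\,G_V(t/b)$ combines with $\log(t^2-b^2)$ to produce the $\tfrac32\log(t-b)+\O(1)$ term, and the remaining constants ($\log 4\pi$, $2\log b$, bounds on $\log(t+b)$ and $\log G_V$ for $t\in(b+N^{-2/3},T)$) are uniformly $\O(1)$; for b), the substitution into Theorem~\ref{thrm_MD}(b) and (c) is exactly as you describe, and your case split at $s=N^{1/6}$ is the right way to absorb the extraneous $\O\lb 1/(sN^{1/3})\rb$ into $\O(s^{-3})+\O(s/N^{2/3})$, which is the only genuinely delicate point and you have handled it correctly.
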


Finally
we present the strong law of large numbers for the largest particle that will be proved at the end of Section \ref{TSec4}.

\begin{cor}\label{a.s.convergence}
Assume the conditions of Theorem \ref{ethrm2} and set $\mathbb{P}:=\bigotimes_{N=1}^\infty P_{N,Q}^h$. Then
\begin{align*}
 \mathbb{P}(\lim_{N\to\infty}x_{\max}=b)=1.
\end{align*}
\end{cor}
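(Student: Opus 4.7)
The plan is to apply the first Borel--Cantelli lemma to the product measure $\mathbb{P}$: it suffices to show that for every $\eps > 0$,
\begin{align*}
\sum_{N=1}^{\infty} P_{N,Q}^h\lb x_{\max} > b + \eps \rb + \sum_{N=1}^{\infty} P_{N,Q}^h\lb x_{\max} < b - \eps \rb \;<\; \infty.
\end{align*}
The two halves of the deviation event $\{|x_{\max}-b|>\eps\}$ will be handled by completely different inputs from the preceding sections.

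For the upper tail I would directly invoke Theorem \ref{thrm_MD}(a). Fixing any $T > b + \eps$, that statement provides a constant $B>1$ such that for every $N$ large enough to satisfy $b+\eps > b + N^{-2/3}$,
\begin{align*}
P_{N,Q}^h\lb x_{\max} > b + \eps \rb \;\leq\; B\,\FNV(b + \eps).
\end{align*}
By the explicit formula \eqref{functionFNV} together with the identity $\eta_V\!\lb (b+\eps)/b \rb = \int_1^{1+\eps/b}\sqrt{s^2-1}\,G_V(s)\,ds > 0$, which uses the positivity of $G_V$, the right-hand side is of order $e^{-cN}/N$ for some $c = c(\eps)>0$ and is therefore summable in $N$.

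The lower tail is the main obstacle, because Theorem \ref{ethrm3} only delivers an $o(1)$ rate uniform in $s\in\R$, which is not summable. My plan is to exploit the positivity and continuity of the density of $\mu_Q^h$ in the interior of its support (Theorem \ref{ethrm1} and Remark \ref{remark_thrm1} b)): the measure $\mu_Q^h$ assigns strictly positive mass to $(b-\eps, b]$, so the event $\{x_{\max} \leq b - \eps\}$ forces the empirical measure $L_N := \frac{1}{N}\sum_i \delta_{x_i}$ to lie in a weakly closed subset of probability measures that does not contain $\mu_Q^h$. A large deviation principle for $L_N$ at speed $N^2$ for these ensembles, available under the present analyticity and strict-convexity hypotheses along the lines of \cite{GoetzeVenker, BPS}, then yields $P_{N,Q}^h(x_{\max} \leq b - \eps) \leq e^{-c'(\eps)\,N^2}$ for all large $N$, which is trivially summable. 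Should such an LDP not be directly citable, an alternative is to use the stochastic linearization representation $P_{N,Q}^h = \int P_{N,V-f/N}\,d\nu(f)$ already employed in the proof of Theorem \ref{thrm_MD}, combined with the classical $e^{-\Theta(N^2)}$ lower-tail bound for the determinantal ensembles $P_{N,V-f/N}$ and the control of the $f$-dependence of the upper endpoint $b_{V-f/N}$ that has been developed there.
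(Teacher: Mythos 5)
Your Borel--Cantelli framework and your treatment of the upper tail are exactly what the paper does: Theorem \ref{thrm_MD}(a) gives $P_{N,Q}^h(x_{\max}>b+\eps)\leq B\,\FNV(b+\eps)$, and since $\eta_V((b+\eps)/b)>0$ this is summable. The gap is in the lower tail. You invoke a large deviation principle for the empirical measure at speed $N^2$ for the ensembles $P_{N,Q}^h$, but no such LDP is proved in this paper, and the references you point to do not supply one in citable form for this class (the paper itself notes that the classes of admissible interactions in \cite{BPS} and here are different, and \cite{GoetzeVenker} proves weak convergence of correlation measures, not an LDP). Your fallback via the linearization representation is only a sketch: you would need uniformity of the determinantal $e^{-\Theta(N^2)}$ lower-tail bound over the $f$-dependent ensembles $P_{N,V-f/N}$, including control of the lower endpoint $a_{V-f/N}$ and the contribution of atypically large $f$, none of which is carried out. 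So as written the lower-tail half of the Borel--Cantelli sum is not actually bounded.

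The paper instead handles the lower tail by an elementary and self-contained argument that sidesteps any empirical-measure LDP. Pick a smooth, nonnegative Lipschitz function $g$ supported in $(b-\eps,\,b)$ (say in $[b-\eps+\delta,\,b-\delta]$) with $\int g\,d\mu=1$. If $x_{\max}<b-\eps$ then $\sum_j g(x_j)=0$, so
\begin{align*}
P_{N,Q}^h(x_{\max}<b-\eps)\ \leq\ P_{N,Q}^h\Bigl(\bigl|\sum_{j=1}^N g(x_j)-N\!\int g\,d\mu\bigr|\geq N\Bigr).
\end{align*}
One then transfers from $P_{N,Q}^h$ to $P_{N,V}$ via \eqref{relation}, \eqref{M1}, \eqref{concentration_U} and H\"older, and applies Chebyshev to the exponential concentration inequality of Proposition \ref{Concentration} (with $\epsilon$ of order $N$) to obtain a bound of order $e^{-cN^2}$. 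This uses only the tools already developed in Sections \ref{Sec3}--\ref{proof_Theorem_2}, requires no LDP, and is what makes the lower-tail sum converge. Replacing your lower-tail paragraph by this test-function-plus-concentration argument would complete the proof; as it stands, that step is missing.
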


\begin{remark}
Corollary \ref{corLogarithm} a) implies a full large deviations principle on $\R$ for $x_{\max}$ with speed $N$ and good 
rate function 
$t\mapsto\eta_V(t/b)$, where we set $\eta_V(t)=\infty$ for $t<1$.
Such a large deviations principle has already been proved in a more general setting
for $\b$-variants of \eqref{e1} (see \cite{Borot2} and references therein). It can be derived via a contraction principle from a 
large deviations principle for the empirical distribution of the particles.
We also refer the reader to \cite{LedouxRider} and for results on combinatorial models that are related to invariant matrix 
ensembles to \cite{JohT, BDMMZ, Merkl1, Merkl2}.
As mentioned above, stronger results in non-logarithmic form as in Theorem \ref{thrm_MD} and Corollary \ref{corTW} have been shown 
for determinantal ensembles  in \cite{DissSchueler,EKS}.
\end{remark}

The paper is organized as follows.  Section \ref{Sec3} provides a sketch of the 
central ideas introduced in \cite{GoetzeVenker} that guide the proofs of all our main results. Then, in Section 
\ref{proof_Theorem_2}, we derive our universality result on the correlation functions. The last section deals with our various 
results on the distribution 
of the largest particle.
In Appendix \ref{Sec2}, we collect all results from 
\cite{KSSV} on determinantal ensembles that 
are needed in this paper and bring them in a form that is suitable for our purposes.

\section{Outline of the Method}\label{Sec3}

We first mention the basic steps from \cite{GoetzeVenker} in the analysis of $P_{N,Q}^h$. The additional interaction term
$\sum_{i<j}h(x_i-x_j)$ is in general of order $N^2$, so it may influence the limiting measure. The idea is to split it into a term
contributing to the formation of the limiting measure and a perturbation term of lower order. To this end, let us introduce for a
probability measure $\mu$ on $\R$ the notation $h_\mu(t):=\int h(t-s)d\mu(s)$ and $h_{\mu\mu}:=\int\int h(t-s)d\mu(t)d\mu(s)$. We
can then write
\begin{align}
 &\sum_{i<j}h(x_i-x_j)=\frac{1}{2}\sum_{i,j}h(x_i-x_j)-\frac{N}{2}h(0)\nonumber\\
=&N\sum_{j=1}^Nh_\mu(x_j)+\frac{1}{2}\sum_{i,j}\left[h(x_i-x_j)-h_\mu(x_i)-h_\mu(x_j)+h_{\mu\mu}\right]+C_N,\label{e5}
\end{align}
where $C_N:=-(N/2) h(0)-(N^2/2)h_{\mu\mu}$. The term in brackets in \eqref{e5} is the Hoeffding decomposition of the statistic
$\sum_{i<j}h(x_i-x_j)$ w.r.t. the measure $\mu$. The term $N\sum_{j=1}^Nh_\mu(x_j)$ will be added to the external field $Q$ forming
a new potential $V_\mu:=Q+h_\mu$. Setting
\begin{align}
 \U_\mu(x):=-\frac{1}{2}\sum_{i,j}\left[h(x_i-x_j)-h_\mu(x_i)-h_\mu(x_j)+h_{\mu\mu}\right],\label{def_U}
\end{align}
we arrive at the representation
\begin{align}\label{relation}
 P_{N,Q}^h(x)=\frac{Z_{N,V_\mu}}{Z_{N,V_\mu,\U_\mu}}P_{N,V_\mu}(x)e^{\U_\mu(x)}
\end{align}
with $Z_{N,V_\mu,\U_\mu}:=Z_{N,Q}^he^{C_N}$. Formula \eqref{relation} establishes a relation to a determinantal ensemble, 
indicating that asymptotics of $P_{N,Q}^h$ may be deduced from asymptotics of $P_{N,V_\mu}$. Our aim is to find a measure $\mu$ such 
that the ratio $Z_{N,V_\mu}/Z_{N,V_\mu,\U_\mu}$ and its reciprocal are bounded in $N$. It is straightforward to see that
\begin{align}
 \frac{Z_{N,V_\mu,\U_\mu}}{Z_{N,V_\mu}}=\E_{N,V_\mu}e^{\U_\mu},\label{M1}
\end{align}
where $\E_{N,V_\mu}$ denotes expectation w.r.t.~$P_{N,V_\mu}$. In view of the desired boundedness of \eqref{M1}, $\U_\mu$ should 
be centered at least asymptotically, which motivates the condition that $\mu$ should be the equilibrium measure to the field 
$V_\mu$.
This implicit problem was solved in 
\cite[Lemma 3.1]{GoetzeVenker} by
a fixed point argument, yielding existence but not uniqueness of such a $\mu$. The uniqueness followed later by proving 
that any
measure $\mu$ which is the equilibrium measure to $V_\mu$, is the limiting measure for $P_{N,Q}^h$. From now on let $\mu$ denote 
the unique measure with this property, write $V:=V_\mu$ and $\U:=\U_\mu$. Note that this definition of $V$ is consistent with that 
in Remark \ref{remark_thrm1} b).

At this stage, it can be proved that the ratio
$Z_{N,V}/Z_{N,V,\U}$ is bounded in $N$ and bounded away from $0$ provided that $\a_Q$ is large enough. More precisely, given
$\l>0$, there is an $\a(\l)<\infty$ such that there are constants $0<C_1(\l)<C_2(\l)<\infty$ such that for $\a_Q\geq \a(\l)$
\begin{align}\label{concentration_U}
C_1(\l)\leq\E_{N,V}e^{\l\U}\leq C_2(\l)
\end{align}
for all $N$ (see \cite[Proposition 4.7]{GoetzeVenker} and \cite[Remark 4.8]{GoetzeVenker}).

One main tool to derive bound \eqref{concentration_U} is the following representation of $\U$ in terms of linear 
statistics (cf. \cite[Lemma 4.6]{GoetzeVenker}) using Fourier techniques,
\begin{align}\label{Fourier}
 \U(x)=-\frac{1}{2\sqrt{2\pi}}\int \lv \sum_{j=1}^Ne^{i tx_j}-N\int e^{i 
ts}d\mu(s)\rv^2\F{h}(t)dt,
\end{align}
with $\F{h}(t)=(2\pi)^{-1/2}\int e^{-i ts}h(s)ds$.
The other ingredient to the proof of \eqref{concentration_U} is the 
following concentration of measure inequality for linear statistics which will be needed later on, too.

\begin{prop}\label{Concentration}
Let $Q$ be a real analytic external field with $Q''\geq c>0$. Then there exists a positive constant $C$ such that
for any Lipschitz function $g$ whose third derivative is bounded on an open interval $I\subset \R$ containing the support of the 
equilibrium measure
$\mu_Q$, we have for any
$\epsilon>0$ and $N$
\begin{align*}
\E_{N,Q}\exp\lee{\epsilon\lbb\sum_{j=1}^N g(x_j)-N\int g(t)d\mu_Q(t)\rbb}\ree\leq \exp\lee{\frac{\epsilon^2\Lip{g}^2}{2c}}+
\e C (\|g\|_\infty+\|g^{(3)}\|_\infty)\ree,
\end{align*}
where  $\Lip{g}$ denotes the Lipschitz constant of $g$ and $\|\cdot\|_\infty$ denotes the sup norm on $I$.
\end{prop}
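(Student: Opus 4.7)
The plan is to split the exponent into a random centered fluctuation plus a deterministic mean shift and handle them by different tools. Writing $F(x) := \sum_{j=1}^N g(x_j)$, decompose
\fo
\epsilon \lbb F(x) - N\int g\, d\mu_Q \rbb = \epsilon\lbb F(x) - \E_{N,Q}F \rbb + \epsilon\lbb \E_{N,Q}F - N\int g\, d\mu_Q \rbb .
\foe
The second summand is deterministic and factors out of the expectation, so the proof reduces to (i) a sub-Gaussian bound on $\E_{N,Q}e^{\epsilon(F-\E_{N,Q} F)}$ and (ii) a uniform estimate $\lv \E_{N,Q}F - N\int g\, d\mu_Q \rv \leq C(\|g\|_\infty + \|g^{(3)}\|_\infty)$.

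For step (i) I would invoke Bakry--\'Emery. The joint density of $P_{N,Q}$ is proportional to $e^{-H(x)}$ with $H(x) = N\sum_j Q(x_j) - 2\sum_{i<j}\log|x_i-x_j|$. Restricted to each Weyl chamber the Hessian of $H$ equals $N\,\textup{diag}(Q''(x_j))_j$ plus the Hessian of the logarithmic Coulomb repulsion, which is positive semi-definite; since $Q''\geq c$ by assumption, the whole Hessian is bounded below by $Nc\, I_N$. The symmetry of $P_{N,Q}$ then transfers the resulting logarithmic Sobolev inequality with constant $1/(Nc)$ to symmetric test functions, and Herbst's argument yields $\E_{N,Q}\exp\lbb\epsilon(F-\E_{N,Q}F)\rbb \leq \exp\lbb \epsilon^2\|F\|_{\textup{Lip}}^2/(2Nc)\rbb$ for every Lipschitz $F$. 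For the particular choice $F=\sum_j g(x_j)$ one has $\|\nabla F(x)\|_2^2 = \sum_j g'(x_j)^2 \leq N\,\Lip{g}^2$, which produces exactly the desired prefactor $\exp(\epsilon^2 \Lip{g}^2/(2c))$.

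Step (ii) is the main obstacle. Using the determinantal formula $\E_{N,Q}F = \int g(t)\,K_{N,Q}(t,t)\,dt$ and the fact that $\mu_Q$ has compactly supported density $\rho_{\mu_Q}$, the mean shift equals $\int g(t)\,\lbb K_{N,Q}(t,t) - N\rho_{\mu_Q}(t)\rbb\,dt$. I would plug in the uniform diagonal asymptotics of the Christoffel--Darboux kernel from \cite{KSSV} collected in Appendix \ref{Sec2}: in the bulk $K_{N,Q}(t,t) - N\rho_{\mu_Q}(t)$ splits into a smooth $O(1)$ remainder plus an oscillatory piece of the form $A(t)\cos(N\phi(t))$ with $A,\phi$ smooth and $\phi'$ bounded away from zero. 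Three successive integrations by parts against the cosine transfer the factor $N^3$ onto $g$; the boundary terms are bounded by $\|g\|_\infty$ and the remaining volume integral by $\|g^{(3)}\|_\infty$. Edge neighborhoods of $\textup{supp}\,\mu_Q$, where the bulk expansion degenerates, are of $N$-independent length on the relevant scale and contribute $O(\|g\|_\infty)$; outside $\textup{supp}\,\mu_Q$ the kernel $K_{N,Q}(t,t)$ decays exponentially thanks to the growth of $Q$, again contributing $O(\|g\|_\infty)$. A partition of unity stitches these regions together. Combining steps (i) and (ii) by multiplying the two exponential bounds produces the claimed inequality. The delicate part is thus the third-derivative dependence in step (ii): it is forced by the $1/N$-scale oscillations of $K_{N,Q}(t,t)-N\rho_{\mu_Q}(t)$, and three integrations by parts are precisely what is needed to trade the factor $N^3$ produced by the oscillating cosine against regularity of $g$.
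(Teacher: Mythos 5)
Your step (i) matches the paper exactly: it too decomposes the exponent into a centered Lipschitz statistic plus a deterministic mean shift, establishes a logarithmic Sobolev inequality from strong convexity (the Bakry--\'Emery argument on the Weyl chambers is the mechanism underlying \cite[Proposition 4.4.26]{AGZ} cited in the text), and applies Herbst to produce the factor $\exp\{\epsilon^2\Lip{g}^2/(2c)\}$. The divergence is in step (ii). The paper does not re-derive the mean-shift estimate at all; it invokes it as a black box from \cite[Theorem 1]{Shcherbina} (see also \cite[Theorem 1]{KriecherbauerShcherbina}), where it is proved via a loop-equation / transport argument, not via kernel asymptotics.

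Your proposed kernel-asymptotic derivation of step (ii) has genuine problems. First, the asymptotics for $K_{N,V,f;L}$ collected in Appendix~\ref{Sec2} (Propositions~\ref{keyproposition} and~\ref{prop_LD}) are exclusively edge-scaling results on the Airy scale $b+s/(c^*N^{2/3})$ and moderate-deviation diagonal estimates; they do not provide the bulk Plancherel--Rotach expansion $K_{N,Q}(t,t)-N\rho_{\mu_Q}(t)=\text{smooth}+A(t)\cos(N\phi(t))$ that your argument requires, so the ingredient you plan to ``plug in'' is not available here. Second, the integration-by-parts mechanism you invoke runs in the wrong direction: each integration by parts against $\cos(N\phi(t))$ with $\phi'$ bounded away from zero \emph{gains} a factor $1/N$ rather than requiring one, so for a bounded-amplitude oscillatory term the integral is already $\O(\|g\|_\infty)$ (indeed $\O(\|g\|_\infty/N)$ after one step) without touching $g^{(3)}$. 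Consequently your sketch cannot explain why the bound is $C(\|g\|_\infty+\|g^{(3)}\|_\infty)$: the third-derivative dependence in Shcherbina's estimate does not arise from trading an $N^3$ against oscillations in the bulk, but from the singular kernel manipulations in the Poisson/loop-equation method (roughly, inverting an operator that costs three derivatives). So the overall structure of your proof is right and step (i) is sound, but step (ii) as written would not go through; it should either cite \cite{Shcherbina}/\cite{KriecherbauerShcherbina} as the paper does, or be replaced by a genuinely different and much more careful argument combining full bulk, edge, and transition asymptotics with a correct accounting of where the derivatives of $g$ enter.
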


The proposition can be found in \cite[Corollary 4.4]{GoetzeVenker} and follows from the fact that for strongly convex $Q$, 
$P_{N,Q}$ fulfills a log-Sobolev inequality which implies
concentration of the Lipschitz function $\sum_{j=1}^N g(x_j)$ around its expectation (see e.g. \cite[Proposition 4.4.26]{AGZ}). To
obtain Proposition
\ref{Concentration}, we combine this with the following estimate on the distance of the expectation to its large $N$ limit 
from
\cite[Theorem 1]{Shcherbina} (see also \cite[Theorem 1]{KriecherbauerShcherbina}),
\begin{align*}
 \Big\lvert \E_{N,Q}\sum_{j=1}^N g(x_j)-N\int gd\mu\Big\rvert\leq C(\|g\|_\infty+\|g^{(3)}\|_\infty),
\end{align*}
where $C$ is the constant that then appears in Proposition \ref{Concentration}.

The main idea to tackle the
local universality is linearizing
the bivariate statistics $\U$ to transform it into linear statistics. To this end, assume that $-h$ is positive-definite (i.e. 
$\F{h} \leq 0$; the
general case is reduced to this case). It is well-known (see e.g. \cite{AdlerTaylor}) that due to the positive-definiteness 
of $-h$, there exists a centered, stationary Gaussian process $(\tilde{f}(t))_{t\in\R}$ which has $-h$ as its covariance function. That is, 
the finite-dimensional distributions of $\tilde{f}$ are Gaussian, $\E \tilde{f}(t)=0$ and $\E \tilde{f}(t)\tilde{f}(s)=-h(t-s)$, where $\E$ denotes expectation 
w.r.t. the probability space underlying the Gaussian process.
The key observation now is that for fixed $x \in \R^N$ the random variable $\sum_{j=1}^N\tilde{f}(x_j)-N\int \tilde{f}d\mu$ 
is again a centered Gaussian with variance $2\,\U(x)$, yielding 
\begin{align}
 &\exp\{\U(x)\}=\E \exp\{\sum_{j=1}^N\tilde{f}(x_j)-N\int \tilde{f}d\mu\}=\E \exp\{\sum_{j=1}^Nf(x_j)\}\label{e7}\\
&\text{with}\quad f:=\tilde{f}-\int \tilde{f}d\mu.\label{def_f}
\end{align}
Combining \eqref{e7} with \eqref{relation} and \eqref{M1}, the ensemble $P_{N,Q}^h$ can now be represented as an average over determinantal ensembles $P_{N,V-f/N}$ as follows
\begin{align}
P_{N,Q}^h(x)=\frac{\E\left[Z_{N,V-f/N} P_{N,V-f/N}(x)\right]}{\E\, Z_{N,V-f/N}}.\label{Lago1}
\end{align}
Relation \eqref{Lago1} immediately extends to correlation functions. At this point the key observation for our universality proofs 
is the fact that for almost all $f$ the correlation functions $\rho_{N,V-f/N}^k$ and $\rho_{N,V}^k$ coincide in the region of 
interest to leading order if $N$ is sufficiently large. We will show in the appendix how to extract this information from 
\cite{KSSV} (cf. Proposition 
\ref{keyproposition}).

We end the brief overview of the method of proof by noting that in order to use the rich theory on Gaussian processes, in particular the sub-Gaussianity of maxima of Gaussian processes
on a compact,  we first need to truncate the ensemble to some compact interval $[-L,L]^N$.

Finally, we comment on the role that positive/negative-definiteness of $h$ plays in the approach of \cite{GoetzeVenker} and how it extends to the present paper.
\begin{remark}\label{remark_definiteness}
 The reader might have wondered about the different results depending on $h$ being positive-definite or negative-definite 
(cf. Remark \ref{remark_definiteness_vorne}).
If $h$ is positive-definite, then $\U$ in \eqref{def_U} is negative for all configurations $x$, which can be easily seen from the 
Fourier representation \eqref{Fourier}. This immediately gives the upper bound of \eqref{concentration_U}, without any condition 
on $\a_Q$. The lower bound and the further analysis in the case of positive-definite $h$ only need $V$ to be strongly convex, e.g. 
to apply Proposition \ref{Concentration}. With the crude bound $\a_Q>\sup_{t\in\R}-h''(t)$, given in Remark \ref{remark_thrm1}, 
this is guaranteed.\\
If $h$ is not negative-definite, then the linearization method as outlined in  \eqref{e7} needs to be modified. The 
function $h$ can be decomposed into positive-definite functions $h^\pm$, such that $h=h^+-h^-$. 
If $h^+\not=0$, then $-h$ is not a covariance function, but all functions $zh^++h^-$
with $z>0$ 
are and can be used for the linearization of a modified version $\U_z$ of $\U$. We will then employ Vitali's Theorem from 
complex analysis (a
consequence of Montel's Theorem and the Identity Principle) to transfer the  convergence results for $z>0$ to $z=-1$ that 
corresponds to the original $\U = \U_{-1}$. In this process, however, bounds on the rates of convergence are lost. This is the 
reason why we obtian in all of our results better bounds in the case of negative-definite functions $h$.
\end{remark}

\section{Detailed proof of Theorem \ref{ethrm2}}\label{proof_Theorem_2}
\subsection*{Representation of correlation functions and truncation}\noindent

We begin with a different representation of the correlation functions. For this we need a slightly generalized invariant
ensemble: Define for a continuous $Q$ of sufficient growth, continuous $f$ of moderate growth and $M\in\mathbb{N}$ the density on
$\R^N$ by
\begin{align}
 P_{N,Q,f}^M(x):=\frac{1}{Z_{N,Q,f}^M}\prod_{1\leq i<j\leq N}\lv x_i-x_j\rv^2e^{-M\sum_{j=1}^N Q(x_j)+\sum_{j=1}^N f(x_j)}.
\end{align}

We will usually have $M=N+k$ for some $k$. If $M=N$, we will abbreviate $P_{N,Q,f}:= P_{N,Q,f}^M$ and $ P_{N,Q}^M:= P_{N,Q,f}^M$ if
$f=0$. If $f=0$ and $M=N$, we have $P_{N,Q}$.
The $k$-th correlation function of $P_{N,V}$ at points $t_1,\dots,t_k$ can be written as 
\begin{align}
&\rho^k_{N,V}(t_1,\dots,t_k)\nonumber\\
 =&\int_{\R^{N-k}}\frac{1}{Z_{N,V}}\exp\lee-N\sum_{j=k+1}^NV(x_j)+2\sum\limits_{i<j;\ i,j>k} \log\lvb x_j-x_i\rvb
\ree\nonumber\\
&\times \exp\lee-N\sum_{j=1}^k
V(t_j)+2\sum_{i<j;\ i,j\leq
k} \log\lv t_i-t_j\rv\ree\nonumber\\
&\times\exp\lee 2\sum_{i\leq k,\ j>k}\log\lv t_i-x_j\rv\ree dx_{k+1}\dots dx_N\nonumber\\
&=F(t)\frac{Z_{N-k,V}^N}{Z_{N,V}}\E_{N-k,V}^N\exp\lee{2\sum_{i\leq k,\ j>k}\log\lv t_i-x_j\rv}\ree,\quad \text{where}\nonumber\\
&F(t):=\exp\lee{-N\sum_{j=1}^kV(t_j)+2\sum_{i<j;\ i,j\leq
k}\log\lv t_i-t_j\rv}\ree\label{e12}
\end{align}
 is the factor (\ref{e12}), which depends only on the fixed particles and $\E_{N-k,V}^N$ is the expectation of $P_{N-k,V}^N$.
We label the random eigenvalues of the ensemble $P_{N-k,V}^N$ by $x_{k+1},\dots,x_N$ and abbreviate 
$(t_1,\dots,t_k,x_{k+1},\dots,x_N)$ by $(t,x)$.
Setting
\begin{align}\label{def_R}
 R(t,x):=R_{N-k,V}^N(t,x):=2\sum_{i\leq k,\ j>k}\log\lv
t_i-x_j\rv+\log\big[F(t)\frac{Z_{N-k,V}^N}{Z_{N,V}}\big],
\end{align}
we arrive at the shorthand 
\begin{align}
&\rho^k_{N,V}(t_1,\dots,t_k)=\E_{N-k,V}^N\exp\lee{R(t,\cdot)}\ree.
\end{align}
Using \eqref{relation} and \eqref{M1} and recalling that we set $V=V_\mu$ and $\U=\U_\mu$, we see that the $k$-th correlation 
function $\rho_{N,Q}^{h,k}$ of $P_{N,Q}^h$ at $t_1\dots,t_k$ can be
written as 
\begin{align}
\rho_{N,Q}^{h,k}(t_1,\dots,t_k)=\frac{1}{\E_{N,V}\exp\lee{\U}\ree}\E_{N-k,V}^N\exp\lee{(\U+R)(t,\cdot)}\ree.\label{e8}
\end{align}
The representation \eqref{e8} allows to control the effects of truncation on the correlation functions (see \cite[Lemma 
6.3]{GoetzeVenker}).  More precisely, let $\E_{N,V;L}^M$ denote the expectation w.r.t. the ensemble $P_{N,V;L}^M$ obtained by 
normalizing the ensemble $P_{N,V}^M$
restricted to $[-L,L]^N$. Furthermore, let $R_L$ be the analogue of $R$ in which the partition functions in \eqref{def_R} are 
replaced by their truncated versions. Then \cite[Lemma 6.3]{GoetzeVenker} (see also \cite{Johansson98},\cite{BPS}) states that for 
each $k$ there are $L,C>0$
such that
for all $N$ and
for all $t_1,\dots,t_k$
\begin{align}
 \lv\rho_{N,Q}^{h,k}(t_1,\dots,t_k)-\frac{1}{\E_{N,V;L}\exp\lee{\U}\ree}\E_{N-k,V;L}^N\exp\lee{(\U+R_L)(t,\cdot)}
\ree\rv\leq
e^{-CN}.\label{truncation}
\end{align}
Moreover, we learn from the last inequality in the proof of \cite[Lemma 6.3]{GoetzeVenker} that $L$ can be chosen such that in addition there exists a constant $c>0$ with
\begin{align}
\lvert\rho_{N,Q}^{h,k}(t)\rvert\leq e^{-cN}\label{Lago2}
\end{align}
 for all $t\in\R^k\setminus(-\infty,L]^k$. It can be shown (see \cite[Remark 4.5, Remark 4.8]{GoetzeVenker} and the proof of 
\cite[Proposition 4.7]{GoetzeVenker}) that for such a choice of $L$ and any $\l>0$ we can again find constants $C_1(\l),C_2(\l)>0$ 
with
\begin{align}
0<C_1(\l)\leq\E_{N,V;L}e^{\l\U}\leq C_2(\l)\label{Lago3}
\end{align}
for all $N$, provided $\a_Q\geq\a(\l)$, where $\a(\l)$ can be chosen as in \eqref{concentration_U} and does therefore not depend on $L$.
\subsection*{Linearization and proof of Theorem \ref{ethrm2} b)}\noindent

We now give a more detailed description of the linearization procedure for negative-definite $h$. In this case 
we can
indeed view $-h$
as covariance function of a centered stationary Gaussian process on $\R$ such that  \eqref{e7} holds. Analyticity
of the sample paths can be deduced from the spectral representation of $\tilde{f}$ which we now explain in some detail. 

Let $(B_t)_{t\geq0}$ denote a standard 1-dimensional Brownian motion, in particular $B_0=0$ a.s.. Recall that by the law of the
iterated logarithm, we know that  $\lv B_t\rv$ is almost surely $\O(\sqrt{2t\log\log t})$. Thus, for 
$\map{g}{[0,\infty)}{\R}$
of sufficient decay and smoothness, we can define the Wiener integral
\begin{align}
 \int_0^\infty g(s)dB_s:=-\int_0^\infty B_sdg(s)=-\int_0^\infty B_sg'(s)ds.\label{gaussian_process}
\end{align}
Recall that $(B_t)_t$ is a Gaussian process, that is, its finite dimensional distributions are Gaussian which is equivalent to the
property that finite linear combinations of the family of random variables $\{B_t:t\geq0\}$ are Gaussian. By a limit argument we
see that \eqref{gaussian_process} is a Gaussian random variable and by elementary computations we find that the mean is 0 and the
variance is $\int_0^\infty g(s)^2ds$.

Now, for the representation of $\tilde{f}$, let $(B^1_t)_t, (B^2_t)_t$ denote two independent Brownian motions. Then we can define
\begin{align}
 \tilde{f}(t):=\lb\frac2\pi\rb^{1/4}\int_0^\infty \cos(ts)\sqrt{-{\hat{h}(s)}}dB_s^1+\lb\frac2\pi\rb^{1/4}\int_0^\infty
\sin(ts)\sqrt{-{\hat{h}(s)}}dB_s^2.\label{process_representation}
\end{align}
To verify that $\tilde{f}$ defined in this way has the desired properties, it is enough to note that the right hand side of 
\eqref{process_representation} forms a Gaussian
process on $\R$ (which can be easily checked using the characterization mentioned above) with mean $0$ and covariance function
$-h$. It is a somewhat surprising and maybe not so well-known fact \cite{LukacsSzasz} that the Fourier transform of a 
positive-definite real-analytic function decays exponentially at infinity (see also the discussion preceding \eqref{e472}).  By 
this  exponential decay of $\hat{h}$, representation \eqref{process_representation} continues to 
hold for $w$ from a strip $\{x+iy:x\in\R, \lv y\rv<\d\}$ for some $\d>0$. For later use, we define the region 
$D:=(-L-1,L+1)\times(-\d/3,\d/3)\subset \C$ with  an appropriate choice of $L$ made precise below. We thus see that 
$\tilde{f}$ is analytic on $D$ a.s.. It follows 
also
from \eqref{process_representation} that the extended process $(\tilde{f}(w))_{w\in D}$ is a complex-valued centered Gaussian process and
it
is straightforward to show that the covariance function is 
\begin{align}
\E (\tilde{f}(w_1)\overline{\tilde{f}(w_2)})=-h(w_1-\overline{w_2}).\label{covariance}
\end{align}

Before starting the proof we finally introduce an abbreviation for the Airy kernel determinants (see also \eqref{def_KAi})
\begin{align}
\AI_k(t):=\det\left[K_\textup{Ai}(t_i,t_j)\right]_{1\leq i,j\leq k}.\label{AI}
\end{align}

\begin{proof}[Proof of Theorem \ref{ethrm2} b)]
Let $k$ be fixed and choose $L$ such that \eqref{truncation} and \eqref{Lago2} are satisfied. We first restrict our attention to the 
case $t\in [q,N^\e]$ with $0<\e<\min(2/3-2\s,2/15)$. Note that this includes the region where the Airy kernel determinant 
describes the leading order behavior.
Using	\eqref{truncation} and the lower bound of $\eqref{Lago3}$, it suffices to show
\begin{align}
&(c^*)^{-k}N^{
k/3 }\E_{N-k,V;L}^N\exp\lee{(\U+R_L)(b+\frac{t}{c^*N^{2/3}},\cdot)}\ree-\E_{N,V;L}\exp\lee{\U}\ree\AI_k(t)\label{e9}\\
&=\O\lb
N^{-\s}\rb\nonumber
\end{align}
uniform in $t$.
Let $\tilde{f}$ be defined as in \eqref{process_representation} and set $f:=\tilde{f}-\int 
\tilde{f}d\mu$ (cf. \eqref{def_f}).
We now apply the linearization procedure and obtain from \eqref{Lago1}
\begin{align*}
\rho_{N,Q;L}^{h,k}(t)=\frac{\E\left[Z_{N,V-f/N;L} \rho_{N,V-f/N;L}^k(t)\right]}{\E\, Z_{N,V-f/N;L}}.
\end{align*}
Observe that
\begin{align}
Z_{N,V-f/N;L}=Z_{N,V;L}\,\E_{N,V;L}\exp\lbb\sum_{j=1}^Nf(x_j)\rbb\label{Lago4}
\end{align}
together with \eqref{e7} leads to
\begin{align*}
\E\,Z_{N,V-f/N;L}=Z_{N,V;L}\,\E_{N,V;L}\exp(\U).
\end{align*}
The last three relations together with a truncated version of \eqref{e8} yield
\begin{align*}
 &\E_{{N}-k,V;L}^{N}\exp\lee{(\U+R_L)(b+\frac{t}{c^*{N}^{2/3}},\cdot)}\ree=\E_{N,V;L}\exp(\U)\rho_{N,Q;L}^{h,k}(b+\frac{t}{c^*{N}^{2/3}})\nonumber\\
=&\E\Big[\E_{N,V;L}
\exp\lee\sum_{j=1}^{N}f(x_j)\ree \rho_{N,V,f;L}^k\lbb
b+\frac{t_1}{c^*N^{2/3}},\dots,b+\frac{t_k}{c^*N^{2/3}}\rbb\Big],
\end{align*}
where $\rho_{N,V,f;L}^k$ is the $k$-th correlation function of the unitary
invariant ensemble $P_{N,V-f/N;L}=:P_{N,V,f;L}$ defined
on $[-L,L]^N$. We thus get that \eqref{e9} is equal to
\begin{align}
\E\Big[\E_{N,V;L}
\exp\lee\sum_{j=1}^{N}f({x_j})\ree\Big( (c^*)^{-k}N^{k/3
}\rho_{N,V,f;L}^k\lbb
b+\frac{t_1}{c^*N^{2/3}},\dots,b+\frac{t_k}{c^*N^{2/3}}\rbb- \AI_k(t)\Big)\Big].\label{e11}
\end{align}
By Proposition \ref{keyproposition} (observe that $c^*=\g_V$ and $a_V=-b_V$), by the determinantal relations \eqref{determinantal_relations},
and by the almost sure analyticity of $f$, we obtain for the term in the inner
parenthesis 
\begin{align*}
(c^*)^{-k}N^{k/3
}\rho_{N,V,f;L}^k\lbb
b+\frac{t_1}{c^*N^{2/3}},\dots,b+\frac{t_k}{c^*N^{2/3}}\rbb- \AI_k(t) = \O(N^{-\s})
\end{align*}
for almost all $f$ with the $\O$-term uniform for $\|f\|_D\leq N^\kappa$ with $\k:=1/3-\s-\e/2$.
Since $\E\,\E_{N,V;L}
\exp\lee\sum_{j=1}^{N}f({x_j})\ree=\E_{N,V;L}
\exp\lee\U\ree$ is uniformly bounded in $N$ (cf. \eqref{Lago3}), we have derived
\begin{align}
\E\Big[\,&\dopp{1}_{\{\|f\|_D\leq N^\kappa\}}\E_{N,V;L}
\exp\lee\sum_{j=1}^{N}f({x_j})\ree\nonumber\\
&\times\Big( (c^*)^{-k}N^{k/3
}\rho_{N,V,f;L}^k\lbb
b+\frac{t_1}{c^*N^{2/3}},\dots,b+\frac{t_k}{c^*N^{2/3}}\rbb- \AI_k(t)\Big)\Big]=\O(N^{-\s}).\label{Lago42}
\end{align}
Hence the proof is finished for the case $t\in[q,N^\e]^k$ if we can show
\begin{align}
\E\Big[\,&\dopp{1}_{\{\|f\|_D>N^\kappa\}}\times\E_{N,V;L}
\exp\lee\sum_{j=1}^{N}f({x_j})\ree\nonumber\\
&\times\Big( (c^*)^{-k}N^{k/3
}\rho_{N,V,f;L}^k\lbb
b+\frac{t_1}{c^*N^{2/3}},\dots,b+\frac{t_k}{c^*N^{2/3}}\rbb- \AI_k(t)\Big)\Big]=\O(e^{-cN^{2\k}})\label{Gaussian_truncation}
\end{align}
for some $c>0$ uniformly in $t\in [q,N^{\e}]^k$. This bound will follow from an application of H\"older's inequality to 
separate
the three $f$-dependent factors in \eqref{Gaussian_truncation}. Here we will use $L^3$ norms for convenience.

We first treat $\dopp{1}_{\{\|f\|_D>N^\kappa\}}$. It follows
readily from \eqref{process_representation} that real and imaginary parts of $\tilde{f}$ on $D$ are (real-valued) centered Gaussian 
processes.
By Borell's inequality (see e.g. \cite[Theorem 2.1.1]{AdlerTaylor}) the supremum $\|X\|_\infty$
of a real-valued continuous centered Gaussian process $X_t$ over a compact $K$ is sub-Gaussian, i.e. dominated by a Gaussian random variable
with a certain expectation and variance $\s:=\sup_{t\in K}\E
X_t^2$. To apply this fact, we will without further notice always bound the supremum over the open set $D$ by the supremum 
over $\overline{D}$. As the sum of sub-Gaussian random variables is also sub-Gaussian, we see using $\sup_{w\in D}\lv 
\tilde{f}(w)\rv\leq
\sup_{w\in D}\lv \Re \tilde{f}(w)\rv+\sup_{w\in D}\lv \Im \tilde{f}(w)\rv$ that  $\sup_{w\in D}\lv \tilde{f}(w)\rv$ has 
sub-Gaussian tails. Since $\lv\int \tilde{f}d\mu\rv\leq\sup_{w\in D}\lv 
\tilde{f}(w)\rv$  we conclude that $\sup_{w\in D}\lv f(w)\rv$ is sub-Gaussian. Note that $\lvert\Re\tilde{f}\rvert$ and 
$\lvert\Im\tilde{f}\rvert$ are both bounded by $\lvert \tilde{f}\rvert$ and that  $\E \lvert \tilde{f}(w)\rvert^2=-h(2i\Im w)$ by 
\eqref{covariance}. We therefore find that $\sup_{w\in D} \lvert f(w)\rvert$ is dominated by a Gaussian with a variance $4\sup_{w\in 
[-\d/3,\d/3]}-h(2iw)$. Thus there exists $c>0$ such that
\begin{align}\label{Gaussian_LD}
 P\{\|f\|_D>N^\k\}=\O(e^{-3cN^{2\k}})\ \text{ and  hence }\ \lbb\E\dopp{1}_{\{\|f\|_D>N^\kappa\}}^3\rbb^{1/3}=\O(e^{-cN^{2\k}}).
\end{align}

We now turn to the second factor  $\E_{N,V;L}
\exp\lee\sum_{j=1}^{N}f({x_j})\ree$ in \eqref{Gaussian_truncation}.
Proposition \ref{Concentration} extends to the case of a truncated ensemble,
yielding an error of exponential order which we omit in the following (cf. \cite[Remark 4.5]{GoetzeVenker}). Thus we have
\begin{align}
\E\,\Big[ \E_{N,V;L}
\exp\lee\sum_{j=1}^{N}f({x_j})\ree\Big]^3\leq \E\exp\lee{\frac{3\Lip{f}^2}{2\a_V}}+
 3C(\|f\|_{[-L,L]}+\|f^{(3)}\|_{[-L,L]})\ree.\label{e10}
\end{align}
Observe  that the
Lipschitz constant is evaluated over $[-L,L]$ instead of $\R$. 

In order to estimate this $L^3$ norm, we argue in addition to the sub-Gaussianity of $\|\tilde{f}\|_{[-L,L]}$  that also the 
processes $\|\tilde{f}'\|_{[-L,L]}, \|\tilde{f}^{(3)}\|_{[-L,L]}$ are sub-Gaussian. To this end note that the process $\tilde{f}'$ 
is a centered stationary Gaussian process with covariance function $h''$. 
Borell's inequality yields again sub-Gaussianity of $\Lip{f}$  and analogous arguments prove
sub-Gaussianity of $\|f^{(3)}\|_{[-L,L]}$ as well. This shows the finiteness of the r.h.s.~of \eqref{e10} provided that $\a_Q$ and 
hence $\a_V$ is sufficiently large.
It is noteworthy that the condition on $\a_Q$ is determined by the variance $h''(0)$ of $\tilde{f}'$ only. As $\tilde{f}'$ is 
stationary, this $\a_Q$ is independent of $L$ and hence of $k$. In fact, this is the very reason that in  Theorem \ref{ethrm2} the 
convergence of \textit{all} correlation functions can be derived for a given function $h$.

Next, we will estimate $(c^*)^{-k}N^{k/3
}\rho_{N,V,f;L}^k\lbb
b+\frac{t_1}{c^*N^{2/3}},\dots,b+\frac{t_k}{c^*N^{2/3}}\rbb$.
From \eqref{determinantal_relations} we see that 
$(\tilde{K}_{N}(t_i,t_j))_{1\leq i,j\leq k}=:A$ (where $\tilde{K}_N$ is now a shorthand for the kernel $\tilde{K}_{N,V,f;L}$ associated to $P_{N,V,f;L}$ 
evaluated at rescaled variables $b+\frac{t_j}{c^*N^{2/3}}$, $1\leq j\leq k$) is positive
semi-definite
and can hence be
written as $A=B^2$ for some Hermitian matrix $B$. Now using Hadamard's inequality we obtain
\begin{align}\label{THad}
 &\det A=\lb \det B\rb^2\leq \prod_{j=1}^k\sum_{i=1}^k\lv B_{ij}\rv ^2=\prod_{j=1}^kA_{jj}.
\end{align}
In our case this reads
\begin{align}
 &\rho_{N,V,f;L}^k\lbb
b+\frac{t_1}{c^*N^{2/3}},\dots,b+\frac{t_k}{c^*N^{2/3}}\rbb\leq (N-k)!/(N!) \prod_{j=1}^k\tilde{K}_N(t_j,t_j)\nonumber\\
&\leq e^k\prod_{j=1}^k
\rho_{N,V,f;L}^{1}(b+\frac{t_j}{c^*N^{2/3}}).\label{Hadamard}
\end{align}
Now, we use the well-known (see e.g. \cite{Freud}) representation
\begin{align*}
	\rho^1_{N,V,f;L}(t)= \frac{e^{-NV+f}}{N\l_N(e^{-NV+f},t)}, \l_N(e^{-NV+f},t):=\inf_{P_{N-1}(t)=1}\int_{-L}^{L} \lv
P_{N-1}(s)\rv^2e^{-NV(s)+f(s)}ds,
\end{align*}
where the infimum is taken over all polynomials $P_{N-1}$ of at most degree $N-1$ with the property that $P_{N-1}(t)=1$. From this
it is obvious that
\begin{align*}
 \rho_{N,V,f;L}^{1}(b+\frac{t_j}{c^*N^{2/3}})\leq \rho_{N,V;L}^{1}(b+\frac{t_j}{c^*N^{2/3}})e^{2\|f\|_{[-L,L]}}.
\end{align*}
By Proposition \ref{keyproposition}, 
\eqref{determinantal_relations}, and by the uniform boundedness of the Airy kernel in the region of interest, we find that
\begin{align}
&(c^*)^{-1}N^{1/3
}\rho_{N,V,f;L}^{1}(b+\frac{t_j}{c^*N^{2/3}})\leq C e^{2\|f\|_{[-L,L]}}\ \text{ and thus}\nonumber\\
&(c^*)^{-k}N^{k/3
}\rho_{N,V,f;L}^k\lbb
b+\frac{t_1}{c^*N^{2/3}},\dots,b+\frac{t_k}{c^*N^{2/3}}\rbb\leq (Ce^{2\|f\|_{[-L,L]} +1})^k,\label{e15}
\end{align}
where $C$ does not depend on $f$.

This estimate together with the uniform boundedness of $\AI_k$ and the sub-Gaussianity proves the uniform boundedness of the third factor 
\begin{align*}
(c^*)^{-k}N^{k/3
}\rho_{N,V,f;L}^k\lbb
b+\frac{t_1}{c^*N^{2/3}},\dots,b+\frac{t_k}{c^*N^{2/3}}\rbb- \AI_k(t)
\end{align*}
in \eqref{Gaussian_truncation} in the $L^3$ norm. This completes the case $t\in[q,N^\e]^k$.

Assume now that at least one component of $t$, say $t_j$, is larger than $N^\e$. Then the asymptotics of the Airy kernel (see e.g. 
\eqref{KAi_asymp}) imply that all entries of the $j$-th column of $(K_{\textup{Ai}}(t_i,t_l))_{1\leq i,l\leq k}$ and consequently 
$\AI_k(t)$ are bounded by $\O(\exp(-N^{\e}))$ (cf. Remark \ref{remark_definiteness_vorne} a)). We are left to prove a similar bound 
for the l.h.s.~of \eqref{Ttheorem3}. If one of the components $t_j$ is such that $b+\frac{t_j}{c^*N^{2/3}}>L$, this follows from 
\eqref{Lago2} and for the remaining cases the estimate readily follows from Proposition \ref{prop_LD} a), \eqref{Hadamard} 
and Proposition \ref{keyproposition} for those components of $t$ which lie in $[q,N^\e]$.
Of course, Proposition \ref{keyproposition} can only be applied for $\| f \| \leq N^{\k}$. One may use the same arguments as in the 
case $t\in[q,N^\e]^k$ to bound the contribution of $\{ f : \| f \| > N^{\k}\}$.
\end{proof}

\subsection*{Extension to general $h$}\noindent

If $-h$ is not positive-definite, we may write it as a difference of positive-definite functions. Denoting by $g_{\pm}$
nonnegative and negative
part of a function, we first write $\F{h}=(\F{h})_+-(\F{h})_-$. Setting
$h^\pm$ as the inverse Fourier transform of $\F{h}_\pm$, we get a 
decomposition
$h=h^+-h^-$ of $h$ into positive-definite, real-analytic functions. It is this step where the assumption of exponential decay
of $\hat{h}$ is needed. Exponential decay of $\hat{h}$ is equivalent to $h^\pm$ being real-analytic. Sufficiency is easily seen as
the exponential decay allows for an entension of the Fourier representation of $h$ to a strip $D$ from which analyticity of $h^\pm$
can be deduced. For the necessity we remark that any real-analytic positive-definite function has an exponentially decaying Fourier
transform \cite[Theorem 2]{LukacsSzasz} and thus with $h^\pm$ also $h$ must have this property.

 Define for a complex parameter $z\in\C$
\begin{align}
 \U_z(x)=&\frac{z}{2}\lbb\sum_{i,j=1}^Nh^+(x_i-x_j)-\left[h^+_\mu(x_i)+h^+_\mu(x_j)-h^+_{\mu\mu}\right]\rbb\label{e472}\\
+&\frac{1}{2}\lbb\sum_{i,j=1}^Nh^-(x_i-x_j)-\left[h^-_\mu(x_i)+h^-_\mu(x_j)-h^-_{\mu\mu}\right]\rbb\label{e482}.
  \end{align}
We have $\U_{-1}=\U$.
As seen in \eqref{e9}, we have to show
\begin{align}
&(c^*)^{-k}N^{
k/3 
}\E_{N-k,V;L}^N\exp\lee{(\U_{z}+R_L)(b+\frac{t}{c^*N^{2/3}},\cdot)}\ree-\E_{N,V;L}\exp\lee{\U_{z}}\ree\AI_k(t)=o(1)\label{e92}
\end{align}
for $z=-1$ uniformly in $t\in[q,\infty)^k$. Let us recall two basic facts 
from analysis. As our
linearization procedure only allows for nonnegative real $z$, we will prove
\eqref{e92} for positive real $z$ and use complex analysis to deduce \eqref{e92} also for $z=-1$.  Recall Vitali's Theorem for instance from
\cite[5.21]{Titchmarsh}: Let $(f_n)_n$ be a sequence of analytic functions on a domain $U\subset\C$ with $\lv f_n(z)\rv\leq M$ for
all
$n$ and all $z\in U$.
Assume that $\lim_{n\to\infty}f_n(z)$ exists for a set of $z$ having a limit point in $U$. Then $\lim_{n\to\infty}f_n(z)$ exists
for all $z\in U$ and the limit is an analytic function in $z$.

To capture the uniformity of the convergence in \eqref{e92} in a way which preserves analyticity in $z$, we use the following
obvious characterization. A sequence of complex-valued functions $(f_n)_n$ converges uniformly on the
sequence of sets $(A_n)_n$, $A_n\subset \R^l$ towards a function $f$ if and only if for all sequences
$(n_m)_m\subset \mathbb{N}$ with $\lim_{m\to\infty}n_m=\infty$ and all sequences $(t_m)_m$ with $t_m\in A_{n_m}$ we have
$\lim_{m\to\infty}f_{n_m}(t_m)-f(t_m)=0$.

For our application, we take $A:=A_N:=[q,\infty)^k$. Let $(N_m)_m\subset\N$ be a
sequence going to
infinity and $(t_{(m)})_m$ be a sequence with $t_{(m)}\in A$.
Define $\map{W_m}{\C}{\C}$ by
\begin{align}
 W_m(z):=&(c^*)^{-k}N_m^{k/3
}\E_{{N_m}-k,V;L}^{N_m}\exp\lee{(\U_{z}+R_L)(b+\frac{t_{(m)}}{c^*{N_m}^{2/3}},\cdot)}\ree\nonumber\\
&-\E_{N_m,V;L}\exp\lee{\U_{z}}
\ree\AI_k(t_{(m)}).\label{W_m}
\end{align}
With these two observations, we are ready to complete the proof of Theorem \ref{ethrm2}.
\begin{proof}[Proof of Theorem \ref{ethrm2} a)]
We start by showing convergence of $W_m(z)$ uniformly for $z\in[0,1]$.
We will mostly omit the index $m$ in the following. Define $-h_z:=zh^++h^-$ such that $h_{-1}=h$, denote by $\tilde{f}_z$ 
the corresponding Gaussian process, and set $f_z:=\tilde{f}_z-\int \tilde{f}_zd\mu$. Now, using the same arguments given in the 
proof of Theorem \ref{ethrm2} b) we find that 
\begin{align*}
&\E\Big[\E_{N,V;L}
\exp\lee\sum_{j=1}^{N}f_z({x_j})\ree\Big( (c^*)^{-k}N^{k/3
}\rho_{N,V,f_z;L}^k\lbb
b+\frac{t_1}{c^*N^{2/3}},\dots,b+\frac{t_k}{c^*N^{2/3}}\rbb- \AI_k(t)\Big)\Big]\\
&=\O(N^{-\s}).
\end{align*}
The convergence is uniform in $z\in[0,1]$. This is a consequence of the boundedness of $\sup_{w\in [-\d/3,\d/3]}-h_z(2iw)$ for 
$z\in[0,1]$ (cf. the arguments above \eqref{Gaussian_LD}). Note that the choice of $\a_Q$ in the hypothesis of Theorem \ref{ethrm2} 
depends on this bound.
This settles the convergence of $W_m$ on $[0,1]$.

We complete the proof by demonstrating uniform boundedness of $W_m$ on the domain $G:=\{z\in\C:\Re z< 1\}$. We first observe that applying \eqref{def_U} and \eqref{Fourier} to $h^+$ instead of $h$ yields the non-negativity of $\sum_{i,j=1}^Nh^+(x_i-x_j)-\left[h^+_\mu(x_i)+h^+_\mu(x_j)-h^+_{\mu\mu}\right]$. Thus $\Re\, \U_z(x)\leq\U_1(x)$ for all $z\in G$ and all $x$ and consequently we have 
\begin{align}
\sup_{N}\E_{N,V;L}\lv \exp(\U_z)\rv\leq\sup_{N}\E_{N,V;L}\exp(\U_1)<\infty,\label{U1_bound}
\end{align}
where the last inequality follows by linearization as above. Hence the second term in the definition of $W_m$ is uniformly bounded 
on $G$. Furthermore, the uniform boundedness of the first term in \eqref{W_m} follows from $\Re\, \U_z\leq\U_1$ for all $z\in G$, 
from the convergence and hence boundedness of $W_m(1)$ and from \eqref{U1_bound}. We are now in a position to apply Vitali's 
theorem, providing the desired convergence of $W_m(-1)$ to $0$.
\end{proof}

Finally, we would like to point out a subtlety that can be explained e.g. by looking at relation \eqref{e8}. The $z$-dependence used 
above is introduced by twice replacing $\U$ by $\U_z$ on the right hand side of \eqref{e8}. Observe that this is in general not the 
same as replacing $h$ by $h_z$ on the left hand side. In fact, we never consider the ensembles $P_{N, Q}^{h_z}$ with $z \neq -1$, 
i.e. for $h_z \neq h$. The function $V$ and the associated equilibrium measure $\mu$ do not depend on $z$ either.

\section{Proofs of Results on the Largest Particle}\label{TSec4}
\begin{proof}[Proof of Theorem \ref{ethrm3}]
We have to compute the limit of the gap probability
\begin{align*}
 &P_{N,Q}^h\lbb x_j\notin(b+\frac{s}{c^*N^{2/3}},\infty),\quad j=1,\dots,N\rbb.
\end{align*}
This proof uses the same techniques and route as the proof of Theorem \ref{ethrm2}. However, in that proof the truncation threshold
$L$ depends on the order of the correlation function $k$, leading to an $\O$-term depending on $k$ in a non-obvious way, e.g.~in
the expectation of quantities like $\|f\|_D$ or $\Lip{f}$. It is therefore convenient to truncate the event before expanding in
terms of correlation functions. To this end, we recall from the last inequality in the proof of
the truncation lemma \cite[Lemma 6.3]{GoetzeVenker} that
\begin{align}
\rho_{N,Q}^{h,1}(t)\leq \exp\{{CN}-c_1N[V(t)-c_2\log(1+t^2)]\}\label{truncation_Q}
\end{align}
for some positive $C,c_1,c_2$. Hence, if $L$ is chosen large enough, we have for some $c>0$
\begin{align*}
 &P_{N,Q}^h\lbb x_j\notin[-L,L]\ \text{ for some }
j\rbb\leq 2N\int_{L}^\infty \rho_{N,Q}^{h,1}(t)dt=\O(e^{-cN}).
\end{align*}
From this we also conclude that the replacement of the normalizing constant $Z_{N,Q}^h$ by $Z_{N,Q;L}^h$
is negligible, where $Z_{N,Q;L}^h$ denotes the normalizing constant for the ensemble $P_{N,Q;L}^h$ that arises from $P_{N,Q}^h$ 
by restricting it onto $[-L,L]^N$.
We thus have
\begin{align}
 &P_{N,Q}^h\lbb x_j\notin(b+\frac{s}{c^*N^{2/3}},\infty) \text{ for all }
j\rbb=P_{N,Q;L}^h\lbb
x_j\notin(b+\frac{s}{c^*N^{2/3}},L) \text{ for all }
j\rbb+\O(e^{-cN}).\label{truncation_probability}
\end{align}
 To evaluate the latter probability, we proceed as in the proofs of Theorem \ref{ethrm2} a) and b). We will write 
explicitly only the case of negative-definite $h$, the general case follows then with the additional arguments given in the proof 
of Theorem \ref{ethrm2} a). Note that the seemingly stronger uniformity for $s\in\R$ in a) is a simple consequence of the 
continuity of $F_2$. In order to show
\begin{align*}
P_{N,Q;L}^h\lbb x_j\notin(b+\frac{s}{c^*N^{2/3}},L), j=1,...,N\rbb-F_2(s)=\O(N^{-\s})
\end{align*}
it suffices to prove the bound
\begin{align}\label{Lago5}
\E\,
\big[\E_{N,V;L}\exp\{\sum_{j=1}^Nf(x_j)\}
\lbb P_{N,V,f;L}\lbb
x_j\notin(b+\frac{s}{c^*N^{2/3}},L),j=1,...,N\rbb-F_2(s)\rbb\big]=\O(N^{-\s})
\end{align}
(cf.~\eqref{e11}).  In order to see this,  integrate \eqref{Lago1} over $[-L,b+\frac{s}{c^*N^{2/3}}]^N$ and use \eqref{Lago4} 
together with the lower bound of \eqref{Lago3}. As in the proof of Theorem \ref{ethrm2} b) we choose $\e$ and $\k$ depending on 
$\s$. Again we can neglect the contribution of those $f$ to the expectation in \eqref{Lago5} with $\|f\|_D>N^\k$.
We are therefore left to prove
\begin{align}
P_{N,V,f;L}\lbb
x_j\notin(b+\frac{s}{c^*N^{2/3}},L),j=1,...,N\rbb-F_2(s) =\O(N^{-\s})\label{boils_down}
\end{align}
uniformly for all $f \in X_D$ with $\|f\| \leq N^{\k}$ and all $s\in[q,(L-b)c^*N^{2/3}]$.
 To this end, we will represent both probabilities  as Fredholm determinants. It is well-known (see e.g. \cite{TracyWidom} for a 
nice 
derivation)
that 
\begin{align*}
 &P_{N,V,f;L}\lbb
x_j\notin(b+\frac{s}{c^*N^{2/3}},L),j=1,...,N)=\det(I-\hat{\mathcal{K}}_{N,V,f;L})=:\Delta(\hat{\mathcal{K}}_{N,V,f;L}),
\end{align*}
where $\hat{\mathcal{K}}_{N,V,f;L}$ denotes the integral operator on $L^2((s,(L-b)c^*N^{2/3}))$ with kernel
\begin{align*}
 \hat{K}_{N,V,f;L}(t_1,t_2) := \frac{1}{N^{2/3}c^*}K_{N,V,f;L}(b+\frac{t_1}{c^*N^{2/3}},b+\frac{t_2}{c^*N^{2/3}}).
\end{align*}
Observe that $\hat{K}$ agrees with the definition in Proposition \ref{keyproposition} since $b=b_V$ and $c^* = \g_V$.
For 
$F_2$ we have $F_2(s)=\Delta(\mathcal{K}_\Ai)$ with $\mathcal{K}_\Ai$ (cf. \eqref{F_2_rep}) being the integral operator w.r.t.~the 
Airy kernel on 
$L^2((s,\infty))$. For comparison, it will turn out convenient to consider Fredholm determinants of integral operators 
that are defined on the same $L^2$-space. To this end, define $\mathcal{K}_{\Ai;L}$ as the integral operator on 
$L^2((s,(L-b)c^*N^{2/3}))$ w.r.t.~the Airy kernel. Let us show that replacing $F_2(s)$ by $\Delta(\mathcal{K}_{\Ai;L})$ only 
results in a negligible error. Let $\zeta$ denote the 
determinantal point process on $\R$ determined by the 
Airy kernel (see e.g. \cite[Proposition 4.2.30]{AGZ}). Then both $F_2(s)$ and $\Delta(\mathcal{K}_{\Ai;L})$ can be represented as 
gap probabilities w.r.t.~$\zeta$. Let $P$ be the probability measure underlying $\zeta$.  From the asymptotics 
\eqref{TW_asymptotics} of $F_2$
we conclude
\begin{align*}
 &\lv F_2(s)-\Delta(\mathcal{K}_{\Ai;L})\rv=\lv P(\zeta\cap(s,\infty)=\emptyset)-P(\zeta\cap(s,(L-b)c^*N^{2/3})=\emptyset)\rv\\
&\leq P(\zeta\cap[(L-b)c^*N^{2/3},\infty)\not=\emptyset )=1-F_2((L-b)c^*N^{2/3})=\O(e^{-cN})
\end{align*}
for some $c>0$.
Thus it suffices to estimate the difference $\Delta(\hat{\mathcal{K}}_{N,V,f;L})-\Delta(\mathcal{K}_{\Ai;L})$.
Here we use again the classic series representation of Fredholm determinants \cite[Def. 3.4.3]{AGZ} and the inequality 
\cite[Lemma 
3.4.5]{AGZ} on
the difference between the Fredholm determinants of two integral operators $\mathcal{S,Q}$ with bounded kernels $S,Q$ on
an interval $(c,d)$. Let $\nu$ be a finite measure on $(c,d)$ with total mass $\|\nu\|_1$. Let $\Delta(\mathcal
S,\nu):=\det(I-\mathcal S)$ denote the Fredholm determinant of $\mathcal S$ on $L^2((c,d),\nu)$. Then
\begin{align}
 \lv \Delta(\mathcal S, \nu)-\Delta(\mathcal Q, \nu)\rv\leq
\left(\sum_{k=1}^\infty\frac{k^{1+k/2}\|\nu\|_1^k\max(\|S\|_\infty,\|Q\|_\infty)^{k-1}}{k!}\right)\|S-Q\|_\infty,
\label{Fredholm_inequ}
\end{align}
where $\|\cdot\|_\infty$ denotes the sup-norm on $(c,d)^2$. A natural choice would be to use the Lebesgue measure for $\nu$ on 
the 
intervals $(s,(L-b)c^*N^{2/3})$. However, $\|\nu\|_1$ would then be of order $N^{2/3}$ and consequently the series in 
\eqref{Fredholm_inequ} could not be bounded uniformly in $N$. For our application one can circumvent this issue by transfering 
the fast decay of the kernels onto the measure $\nu$. 
In order to do so
we work with $L^2((s,(L-b)c^*N^{2/3}),\nu)$, $d\nu(t):=e^{-2t}dt$ and set
\begin{align*}
 S_N(t_1,t_2):= \hat{K}_{N,V,f;L}(t_1, t_2)e^{t_1+t_2} 
,\qquad
Q(t_1,t_2):=K_{\Ai}(t_1,t_2)e^{t_1+t_2}.
\end{align*}
Using the above mentioned representation of the Fredholm determinant one immediately obtains
$\Delta({\hat{\mathcal{K}}_{N,V,f;L}})=\Delta({\hat{\mathcal{K}}_{N,V,f;L}},dt)=\Delta(\mathcal{S}_N,\nu)$ and
$\Delta(\mathcal{K}_{\Ai;L}){=\Delta(\mathcal{K}_{\Ai;L},dt)}=\Delta(\mathcal{Q},\nu)$. Observe that $\|\nu\|_1$ is now bounded uniformly in 
$N$. {The same holds for 
$\|Q\|_\infty$  which follows from the asymptotic behavior} of the Airy kernel as presented e.g.~in \cite[(4.23)]{KSSV}). The uniform 
boundedness of   $\|S_N\|_\infty$ can be derived 
as follows. Note first from the determinantal  formula \eqref{determinantal_relations} for $k=2$, from the positivity of the 
$2$-point correlation function and from the symmetry of the kernel that
\begin{align}
 S_N(t_1,t_2)^2 \leq S_N(t_1,t_1)S_N(t_2,t_2). \label{Testimate1}
\end{align}
The boundedness of  $S_N(t,t)$, $t \in (s,(L-b)c^*N^{2/3})$ on the diagonal can in turn be derived from the first two statements of 
Proposition \ref{keyproposition} and from Proposition \ref{prop_LD} a), where the asymptotic behavior of the Airy kernel and a 
lower bound on 
$\eta_V(x)$ of the form $c' (x-1)^{3/2}$ are being used in addition. We have 
now shown  that the series on the right hand side of \eqref{Fredholm_inequ} is uniformly bounded in $N$ if we choose $\mathcal S_N$
for $\mathcal S$. Establishing \eqref{boils_down} uniformly for $s\in[q,(L-b)c^*N^{2/3}]$ thus reduces to proving $\|S_N-Q\|_\infty = \O(N^{-\s})$. To this end recall $\e=2(\frac{1}{3} - \s - \k) > 0$. We distinguish two cases. For $t_1$, $t_2 \in [q, N^{\e}]$ Proposition \ref{keyproposition} 
yields the desired bound. In the case 
that one of the variables is larger than $N^{\e}$, the crude estimate $|S_N-Q| \leq |S_N| + |Q|$ together with 
\eqref{Testimate1} and Proposition \ref{prop_LD} a) provides a bound of the form $\O(e^{-cN^{3\e /2}})$ that suffices easily.
\end{proof}

We continue with the proof of our result for the regimes of moderate and large deviations for the upper tail of the distribution of 
the largest 
particle, which combines the analysis devised in \cite{DissSchueler,EKS} and the asymptotic results of \cite{KSSV} with the 
procedure of the previous proof.

\begin{proof}[Proof of Theorem \ref{thrm_MD}]
We start by observing that it suffices to prove the statements of Theorem \ref{thrm_MD} for $N \geq N_0$ and in the cases of 
statements a), c)  for $t \in (b + C N^{-2/3}, T)$ for arbitrarily large but fixed constants $N_0$ and $C$. Let us first turn our 
attention to statement c). We begin with the truncation of the ensemble to $[-L,L]$ as in \eqref{truncation_probability}. It 
follows from  \eqref{truncation_Q} that the constant $c>0$ in the error $\O(e^{-cN})$ of that truncation can be chosen arbitrarily 
large by increasing $L$. Thus, we may choose $L > T+1$ so large (depending on the choice of $T$ in the statement of the 
theorem) that we have 
for all $t \in (b + N^{-2/3}, T)$:
\begin{align}\label{p6b5}
\frac{P_{N,Q}^h\lbb
 x_{\max}>t\rbb}{\FNV (t)} = \frac{\E_{N,V;L}\left[\dopp{1}_A(x)e^{\U(x)}\right]}{\E_{N,V;L}e^{\U(x)}\FNV(t)} 
 \left[ 1 +\O \lb e^{-cN} \rb
 \right]
\end{align}
for some new positive constant $c$ and with $A:=\{x\in\R^N\,:\,x_{\max}>t\}$.

Now we apply the stochastic linearization to rewrite the numerator and obtain (use in particular the relation for the partition 
functions provided by \eqref{Lago4})
\begin{align}\label{p6b10}
 \E_{N,V;L}\left[\dopp{1}_A(x)e^{\U(x)}\right]=\E\left[ 
\E_{N,V;L}e^{\sum_{j=1}^Nf(x_j)}P_{N,V,f;L}(x_{\max}>t)\right].
\end{align}
In view of \eqref{p6b5}, \eqref{p6b10} we first derive upper and lower estimates on $P_{N,V,f;L}(x_{\max}>t) / \FNV(t)$.
Since $P_{N,V,f;L}$ is determinantal we have
\begin{align}\label{p6b15}
P_{N,V,f;L}(x_{\max}>t) = 1 - \Delta(\mathcal{K}_{N, V, f; L}) ,
\end{align}
where $\Delta(\mathcal{K}_{N,V,f;L})$ denotes the Fredholm determinant related to the integral operator with kernel $K_{N,V,f;L}$ 
and acting on $L^2((t,L),dx)$. Next we apply Proposition \ref{prop_LD} b). The formula there holds for all $t \in (b + C_0 
N^{-2/3}, T)$ (since we have ensured above that $L > T +1$) and for all $f \in X_D$ satisfying $\|f\| \equiv \|f\|_{D}\leq 
c_0N(t-b)$, where $c_0$, $C_0$ denote some positive constants. In this regime a first simple consequence is
\begin{align}\nonumber
\int_t^L \!\! K_{N,V,f;L}(y,y) dy=\O \lb
\frac{1}{N\lv t-b\rv^{3/2}} \rb,
\end{align}
where one should use from the proof of Proposition \ref{prop_LD} b) that
\begin{align}\nonumber
e^{-N\eta_V(t/b_V) + \O \lb \sqrt{t-b_V} \|f\| \rb} = e^{-N\eta_{V,f}\lb\l_{V,f}^{-1}(t)\rb} \leq 1.
\end{align}
The definition of the Fredholm determinant as in \cite[Def. 3.4.3]{AGZ}, Hadamard's inequality (see 
\eqref{THad}) and Proposition \ref{prop_LD} b) then yield the finer estimates
\begin{align}\label{p6b20}
1 - \Delta(\mathcal{K}_{N, V, f; L}) &= \lb \int_t^L \!\! K_{N,V,f;L}(y,y) dy\rb \left[ 1 + \O \lb
\frac{1}{N\lv t-b\rv^{3/2}} \rb 
\right]\\
&= \FNV(t) e^{\O \lb \sqrt{t-b_V} \|f\| \rb} 
\left[1 
+ \O \! \lb \frac{\|f\|}{N\lv t-b\rv} \rb
+\O \! \lb
\frac{1}{N\lv t-b\rv^{3/2}} \! \rb
\right] .
\nonumber
\end{align}
Recalling \eqref{p6b15} we have derived for all $t \in (b + C_0 N^{-2/3}, T)$ the upper bound
\begin{align}\label{p6b25}
\frac{P_{N,V,f;L}(x_{\max}>t)}{\FNV(t)} \leq 1 + \g(\|f\|) , \quad \text{with}
\end{align}
\begin{align}\label{p6b30}
\g(r) :=
\begin{cases}
\frac{C}{N \lv t-b\rv^{3/2}}&\text{for} \quad 0 \leq r < \frac{1}{N \lv t-b\rv^2} \\
Cr\sqrt{t-b}&\text{for} \quad \frac{1}{N \lv t-b\rv^2} \leq r < \frac{1}{\sqrt{t-b}} \\
e^{Cr\sqrt{t-b}}&\text{for} \quad \frac{1}{\sqrt{t-b}} \leq r < c_0N(t-b) \\
\frac{1}{\FNV(t)}&\text{for} \quad c_0N(t-b) \leq r 
\end{cases}
\end{align}
for some suitable constant $C$. In the case $r \geq c_0N(t-b)$ we have used that the numerator of the left hand side of 
\eqref{p6b25} is a probability and thus bounded by $1$.

We now formulate a basic integration lemma that is convenient for estimating the right hand side of \eqref{p6b10}. To this end 
we introduce a function $F$ that is related to the distribution function for $\|f\|$ induced by the Gaussian measure on $f$ and 
modified by the positive density $\E_{N,V;L}e^{\sum_{j=1}^Nf(x_j)}$:
\begin{align}\label{p6b35}
 F(\a) := \E \lb \dopp{1}_{\{\|f\|_D\geq \a\}} \E_{N,V;L} e^{\sum_{j=1}^N f(x_j)} \rb.
\end{align}
\begin{lemma}\label{Lp6b}
Let $0 \leq \a < \b \leq \infty$ and assume that $\g : [\a, \b) \to [0, \infty)$ is a continuously differentiable function. Then
\begin{align}\label{p6b40}
\E \lb \dopp{1}_{\{\a \leq \|f\|_D < \b\}} \E_{N,V;L} e^{\sum_{j=1}^N f(x_j)} (1 + \g(\|f\|)) \rb \leq F(\a)-F(\b) + \g(\a) F(\a) + \int_{\a}^{\b} \!\! F(r) \g'(r) dr.
\end{align}
\end{lemma}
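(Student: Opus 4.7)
The plan is to reduce Lemma \ref{Lp6b} to an application of the fundamental theorem of calculus combined with Fubini's theorem, exploiting the nonnegativity of every quantity involved. First, I would split the left hand side of \eqref{p6b40} as $A_0 + A_1$ with
\fo
A_0 &:= \E\lb \dopp{1}_{\{\a \leq \|f\|_D < \b\}} \E_{N,V;L} e^{\sum_{j=1}^N f(x_j)}\rb, \\
A_1 &:= \E\lb \dopp{1}_{\{\a \leq \|f\|_D < \b\}} \E_{N,V;L} e^{\sum_{j=1}^N f(x_j)}\, \g(\|f\|_D)\rb.
\foe
By the very definition \eqref{p6b35} of $F$ and the fact that $\|f\|_D$ is the supremum of a Gaussian process, hence has a continuous distribution, one has $A_0 = F(\a) - F(\b)$.

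For $A_1$ I would use the fundamental theorem of calculus to write, on the event $\{\a \leq \|f\|_D < \b\}$,
\fo
\g(\|f\|_D) = \g(\a) + \int_\a^{\|f\|_D} \g'(r)\,dr = \g(\a) + \int_\a^\b \dopp{1}_{\{r < \|f\|_D\}}\,\g'(r)\,dr.
\foe
The $\g(\a)$ contribution to $A_1$ is $\g(\a)(F(\a) - F(\b))$, again by the definition of $F$. For the integral contribution I would invoke Tonelli's theorem (the relevant integrand is nonnegative on the intended applications such as the piecewise $\g$ of \eqref{p6b30}, where $\g' \geq 0$) to interchange $\int_\a^\b dr$ with the two nested expectations. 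Intersecting $\{\a \leq \|f\|_D < \b\}$ with $\{r < \|f\|_D\}$ for $r \in [\a,\b)$ yields exactly $\{r < \|f\|_D < \b\}$, so the inner expectation reduces to $F(r) - F(\b)$, giving $A_1 = \g(\a)(F(\a) - F(\b)) + \int_\a^\b \g'(r)(F(r) - F(\b))\,dr$.

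Adding $A_0$ and $A_1$ and then dropping the three non-positive contributions $-F(\b)$, $-\g(\a) F(\b)$, and $-F(\b)\int_\a^\b \g'(r)\,dr$ (each non-positive because $F, \g \geq 0$ and $\g' \geq 0$ in the situations where the lemma is applied) produces the asserted upper bound. There is no real obstacle; the only points requiring care are the justification of Fubini/Tonelli, which is immediate from the nonnegativity of the integrand, and, when $\b = \infty$, the interpretation of $\int_\a^\b \g'$ as an improper integral, for which the monotonicity of $\g$ in the applications again suffices.
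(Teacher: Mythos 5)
Your decomposition via the fundamental theorem of calculus plus Fubini is a perfectly valid, slightly more pedestrian alternative to what the paper does (the paper writes the left side as the Riemann--Stieltjes integral $-\int_\a^\b (1+\g(r))\,dF(r)$, integrates by parts, and drops $-\g(\b)F(\b)\le 0$). Your intermediate identity
\begin{align*}
A_0+A_1 = F(\a)-F(\b) + \g(\a)\big(F(\a)-F(\b)\big) + \int_\a^\b \g'(r)\big(F(r)-F(\b)\big)\,dr
\end{align*}
is correct and coincides with the expression the paper obtains after integrating by parts. However, the final step is off: if you drop all three of $-F(\b)$, $-\g(\a)F(\b)$, and $-F(\b)\int_\a^\b\g'$, what remains is $F(\a)+\g(\a)F(\a)+\int_\a^\b F\g'$, which is strictly \emph{weaker} than the asserted bound $F(\a)-F(\b)+\g(\a)F(\a)+\int_\a^\b F\g'$. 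You must \emph{keep} the $-F(\b)$ term; drop only $-\g(\a)F(\b)-F(\b)\int_\a^\b\g' = -F(\b)\,\g(\b^-)$, which is non-positive by $F,\g\ge 0$ alone. Note also that, phrased this way, the conclusion does not need $\g'\ge 0$ at all, which matters because the lemma as stated makes no such assumption (the Fubini/Tonelli step can then be justified either by restricting to the monotone $\g$'s actually used, as you do, or by absolute convergence from local boundedness of $\g'$ together with the finite mass $F(0)$).
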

To see this, observe first that the left hand side of \eqref{p6b40} is given by the Riemann-Stieltjes integral 
\begin{align}\nonumber
 -\int_{\a}^{\b} 1 + \g(r) dF(r) = - (1+\g) F \mid_{\a}^{\b}  + \int_{\a}^{\b} F(r) \g'(r) dr 
\end{align}
after integration by parts. Using $-\g(\b) F(\b) \leq 0$ completes the derivation of \eqref{p6b40}.

We apply Lemma \ref{Lp6b} on four intervals $[\a, \b) = [\a_{j-1}, \a_j)$ for $j =1$, $2$, $3$, $4$ according to the cases in \eqref{p6b30}, i.e.
$\a_0:=0$, $\a_1:=\frac{1}{N \lv t-b\rv^2}$, $\a_2:=\frac{1}{\sqrt{t-b}}$, $\a_3:=c_0N(t-b)$, and $\a_4:=\infty$. Combining 
\eqref{p6b5}, \eqref{p6b10}, \eqref{p6b25}, \eqref{p6b30} and using $\E_{N,V;L}e^{\U(x)} = F(0)$ (see \eqref{e7}) we obtain
\begin{align}\nonumber
\frac{\E_{N,V;L}\left[\dopp{1}_A(x)e^{\U(x)}\right]}{\E_{N,V;L}e^{\U(x)}\FNV(t)} \leq 1 &+ \g(0) + 
 \sum_{j=1}^{3}\g(\a_j)\frac{F(a_j) }{F(0)} 
 + C\sqrt{t-b} \int_{\a_1}^{\a_2} \frac{F(r)}{F(0)} dr  \\ &+ C\sqrt{t-b} \int_{\a_2}^{\a_3} e^{Cr\sqrt{t-b}}\frac{F(r)}{F(0)} dr.\nonumber
\end{align}
We are left to show that the last six summands on the right hand side are of the form $\O \left(1/(N \lv t-b\rv^{3/2})\right)$ or 
of the form $\O (\sqrt{t-b})$. Since $\g(0)$ and $\g(\a_1)$ are of the first form and trivially $F(\a_1)/F(0) < 1$ the first two of 
these summands are good. Since $\int_{\a_1}^{\a_2} F(r) dr \leq F(0)$ it is clear that the first integral in the sum is of the 
second form. For the remaining three terms we use that there exist positive constants  $d$, $K$ such that $F(r)/F(0) \leq 
K e^{-dr^2}$ for all $r \geq 0$ and for all $N$. To see this, use  the 
sub-Gaussianity of $\|f\|_D$ analogously to the derivation of 
\eqref{Gaussian_truncation} together with the lower bound on $F(0)=\E_{N,V;L}e^\U$ provided by \eqref{Lago3}.
Then it follows
\begin{align}\nonumber
 \g(\a_2) F(\a_2)/F(0) = \O \lb e^{-d/\lv t-b\rv} \rb = \O (\sqrt{t-b}).
\end{align}
Moreover, setting $x:=N \lv t-b\rv^{3/2} \geq C_0 \geq 1$, $y:=x^{4/3}$ and $c_1:= d c_0^2$, we have
\begin{align}\nonumber
 \g(\a_3) F(\a_3)/F(0) = \O \lb x e^{x} e^{-c_1 N^{2/3}y} \rb = \O \lb e^{(2-c_1 N^{2/3})y} \rb = \O \lb e^{-c_1 N^{2/3}/2} \rb
\end{align}
for $N$ sufficiently large (e.g. $N^{2/3} > 4/c_1$ would do) and is therefore of the first form. Finally, we consider the last integral in the sum. It can be estimated by
\begin{align}\nonumber
 \O \lb  \sqrt{t-b} \int_{\a_2}^{\infty} e^{-d r^2/2}dr \rb = \O \lb  \sqrt{t-b} /( d \a_2) \rb = \O \lb  \sqrt{t-b}  \rb.
\end{align}
This completes the analysis of the upper bound. The lower bound is proved in exactly the same way. One 
only needs to replace $1+\g$ by $1-\g$. In summary we obtain
\begin{align}\label{Testim5}
\frac{\E_{N,V;L}\left[\dopp{1}_A(x)e^{\U(x)}\right]}{\E_{N,V;L}e^{\U(x)}\FNV(t)} = 1 + \O\lb
\frac{1}{N\lv t-b\rv^{3/2}}\rb +\O\lb \sqrt{t-b} \rb
\end{align}
for all $t \in (b + C_0 N^{-2/3}, T)$. In view of the remark at the very beginning of the proof and using \eqref{p6b5} we have 
established statement c).

We turn our attention to the case of general functions $h$ that are not necessarily negative-definite. Observe first that 
\eqref{Testim5} still holds if $U$ is replaced by $U_z$ for any positive $z$ where we do not insist on uniformity of the $\O$-terms 
in $z$.
To see this one may use exactly the same proof of \eqref{Testim5} as above with $F$ being replaced $F_z$ which is defined via \eqref{p6b35} with the Gaussian process $f_z$ instead of $f$. One may again show that the tails of $F_z$ are sub-Gaussian and that $\inf_N F_z(0) > 0$. These were the crucial properties of $F$ used in the derivation of \eqref{Testim5}. Using in addition \eqref{U1_bound} we conclude
\begin{align}
 \frac{\E_{N,V;L}\left[\dopp{1}_A(x)e^{\U_z(x)}\right]}{\FNV(t)}-\E_{N,V;L}e^{\U_z} \to 0 \quad \text{for} \ \ N \to \infty \label{deviations_Vitali2}
\end{align}
for $0<z\leq1$ and for those values of $t$ that are considered in statement b) of the theorem. For later use we note that by the same reasoning, the left hand side of \eqref{deviations_Vitali2} remains bounded for $0<z\leq1$ and all $t \in (b + C_0 N^{-2/3}, T)$. In order to apply Vitali's theorem and in view of \eqref{U1_bound} it suffices to show boundedness of 
\begin{align}\label{deviations_Vitali}
\frac{\E_{N,V;L}\left[\dopp{1}_A(x)e^{\U_z(x)}\right]}{\FNV(t)}
\end{align}
e.g. on $G \equiv \{ z \in \C : \Re z < 1\}$. Using the pointwise estimate $\Re\, \U_z(x)\leq\U_1(x)$ for all $z\in G$ this follows 
from \eqref{deviations_Vitali2} for $z=1$ and from \eqref{U1_bound}. Vitali now yields convergence \eqref{deviations_Vitali2} also 
for $z=-1$, i.e. for $U_z$ being replaced by $U$. The lower bound of \eqref{Lago3} for $\l =1$ together with \eqref{p6b5} prove 
statement b).

Observe that we have already shown the upper bound of a). In fact, by \eqref{p6b5} and by the lower bound of \eqref{Lago3} for $\l 
=1$ we are only required to prove an upper bound on \eqref{deviations_Vitali} for $z=-1$. This can be achieved as above where one 
needs to use the boundedness of \eqref{deviations_Vitali2} on all of $(b + C_0 N^{-2/3}, T)$.

We establish the lower bound by contradiction. Assume that there is no such bound. Then, again by \eqref{p6b5}, for any fixed 
(large) $C > 1$ there is a sequence $(t_N)_N$ in $(b + C N^{-2/3}, T)$ 
such that 
\begin{align}
\lim_{N\to\infty}\frac{\E_{N,V;L}\left[\dopp{1}_{A_N}(x)e^{\U(x)}\right]}{\E_{N,V;L}e^{\U(x)}\FNV(t_N)}=0,\label{contradiction}
\end{align}
where $A_N:=\{x\in\R^N\,:\,x_{\max}>t_N\}$.
By monotonicity of $\U_z(x)$ for real values of $z$ we have for $z\in (-\infty,-1]$
\begin{align*}
\frac{\E_{N,V;L}\left[\dopp{1}_{A_N}(x)e^{\U(x)}\right]}{\FNV(t_N)}\geq  
\frac{\E_{N,V;L}\left[\dopp{1}_{A_N}(x)e^{\U_z(x)}\right]}{\FNV(t_N)}
\end{align*}
and thus, by the boundedness of $\E_{N,V;L}e^\U$, both sides converge to 0. By Vitali's theorem, this 
convergence extends to $z \in (0, 1)$ as well. However, we will prove that for  $z=1/2$, \eqref{deviations_Vitali} is bounded 
away 
from 0 uniformly in $N$ and $t \in (b + C N^{-2/3}, T)$ for sufficiently large values of $C$, thus providing the desired 
contradiction.

Recall the definition of $F_{1/2}$ below \eqref{Testim5} and
choose $\b$ such that $F_{1/2}(\b) \leq F_{1/2}(0)/2$.
In view of \eqref{p6b15} and \eqref{p6b20} there exists $C>0$ such that we have for all $t \in (b + C N^{-2/3}, T)$:
\begin{align}\nonumber
 \tilde{d}(T) := \inf_{\|f\| \leq \b}  P_{N,V,f;L}(x_{\max}>t) / \FNV(t) > 0.
\end{align}
Thus the right hand side of \eqref{p6b10} with $\U$ replaced by $\U_{1/2}$ and devided by $\FNV(t)$ is bounded below by 
$\tilde{d}(T)(1-F_{1/2}(\b)) \geq \tilde{d}(T) F_{1/2}(0)/2$. Since in addition $F_{1/2}(0) \geq F(0) =\E_{N,V;L}e^\U$  is  bounded 
away from $0$ we have arrived at the advertised lower bound on \eqref{deviations_Vitali} for $z=1/2$, completing the proof of 
Theorem \ref{thrm_MD}.
\end{proof}

\begin{proof}[Proof of Corollary \ref{a.s.convergence}]
 The corollary follows from an application of the Borel-Cantelli lemma. We thus have to show for any fixed $\e>0$
\begin{align*}
 \sum_{N=1}^\infty P_{N,Q}^h(\lv x_{\max}-b\rv>\e)<\infty.
\end{align*}
The necessary estimate for the probability of $x_{\max}>b+\e$ is provided by Theorem \ref{thrm_MD} a).
It 
remains to derive a bound for the lower tail of the distribution of $x_{\max}$. This can be done as follows. Given $\e>0$, let 
$g$ be 
a smooth, nonnegative Lipschitz function with $\int_{\R} g d\mu=1$ and with support $[b-\e+\d,b-\d]$, where $0<\d<\e/2$. Then 
\begin{align*}
 P_{N,Q}^h(x_{\max}<b-\e)\leq P_{N,Q}^h\lbb\sum_{j=1}^Ng(x_j)=0\rbb\leq P_{N,Q}^h\lbb \sum_{j=1}^Ng(x_j)-N \int 
gd\mu\geq N\rbb.
\end{align*}
With \eqref{relation}, \eqref{M1}, and \eqref{concentration_U}  an application of H\"older's inequality gives 
\begin{align*}
 P_{N,Q}^h\lbb\sum_{j=1}^Ng(x_j)-N\int 
gd\mu\geq N\rbb\leq C\left[P_{N,V}\lbb\sum_{j=1}^Ng(x_j)-N\int 
gd\mu \geq N\rbb\right]^{1/\l}
\end{align*}
for some $C>0$ and some $\l>1$.
An application of 
Chebyshev's inequality to the statement of Proposition \ref{Concentration}  with a good choice of $\e$ shows that this last 
probability is of order 
$e^{-cN^2}$ for 
some $c>0$, provided that $N$ is large enough. The corollary is proved.
\end{proof}

\appendix 
\section{Asymptotics for determinantal ensembles}\label{Sec2}
In this appendix we extract those results from \cite{KSSV} on the asymptotics of the determinantal ensembles 
$P_{N,V,f;L}:=P_{N,V-f/N;L}$, defined on $[-L,L]^N$ that are relevant in our context.

We begin by recalling the notion of the equilibrium measure with respect to a 
general convex
external field $\tilde{V}$. By this we mean the unique Borel probability
measure $\mu_{\tilde{V}}$ which minimizes the energy functional
\begin{align*}
 \mu\mapsto \int\int \log\lv t-s\rv^{-1}d\mu(t)d\mu(s)+\int \tilde{V}(t)d\mu(t).
\end{align*}
A general reference on equilibrium measures with external fields is \cite{SaffTotik}. It is known that under mild assumptions such
a unique minimizer exists. In our case $\tilde{V}$ will be convex and the density of $\mu_{\tilde{V}}$ can be described as follows (see e.g. 
\cite[Sections 1 and 2]{KSSV}): There exist real numbers $a_{\tilde{V}} < b_{\tilde{V}}$ that are uniquely determined by the two equations for $a$ and $b$,
\begin{align*}
\int_a^b \frac{\tilde{V}'(t)}{\sqrt{(b-t)(t-a)}} dt = 0 \ , \quad \int_a^b \frac{t \tilde{V}'(t)}{\sqrt{(b-t)(t-a)}} dt = 2 \pi .
\end{align*}
We denote by $\l_{\tilde{V}}$ the linear rescaling that maps the interval $[-1, 1]$ onto $[a_{\tilde{V}}, b_{\tilde{V}}]$,
\begin{align}
 \l_{\tilde{V}}(t):=\frac{b_{\tilde{V}}-a_{\tilde{V}}}{2}t+\frac{a_{\tilde{V}}+b_{\tilde{V}}}{2} \ ,\label{rescaling}
\end{align}
and introduce a function $G_{\tilde{V}}$ on $\R$ by
\begin{align}
 G_{\tilde{V}}(t):=\frac{1}{\pi}\int_{-1}^1\int_0^1\frac{(\tilde{V}\circ\l_{\tilde{V}})''(t+u(s-t))}{\sqrt{1-s^2}}duds \ ,\label{functionG}
\end{align}
that inherits real analyticity and postivity from $\tilde{V}''$. 
The density of the equilibrium measure is then given by (see \cite[(2.1), (1.11--1.14)]{KSSV})
\begin{align*}
d\mu_{\tilde{V}}(t)=\frac{2}{(b_{\tilde{V}}-a_{\tilde{V}})^2\pi}\sqrt{(t-a_{\tilde{V}})(b_{\tilde{V}}-t)}G_{\tilde{V}}(\l_{\tilde{V}
}^{-1}(t))\dopp{1}_{[a_{\tilde{V}},b_{\tilde{V}}]}(t) dt.
\end{align*}
The linearization technique of \cite{GoetzeVenker} requires to consider ensembles
$P_{N,V-f/N; L}$ that arise from restricting the $P_{N,V-f/N}$ to $[-L,L]^N$ and renormalizing them as probability measures. Note 
that the choice of $L$ throughout the paper ensures that the support $[a_{V-f/N}, b_{V-f/N}]$ of $\mu_{V-f/N}$ is contained in the 
interior of $[-L,L]$. Moreover, the functions $f$ 
are always defined on some fixed domain $D$ in the complex plane that contains $[-L,L]$. They belong to the real 
Banach space $X_D:=\{\map{f}{D}{\C}\,:\,f$ analytic and bounded, $f(D\cap\R)\subset\R\}$, equipped with the norm
 $\|f\|:=\|f\|_{D}:=\sup_{z\in D}\lv f(z)\rv$.  By \cite[Lemma
2.4]{KSSV}, the maps $\tilde{V}\mapsto a_{\tilde{V}}$, $\tilde{V}\mapsto b_{\tilde{V}}$ that relate the external fields to the 
endpoints of the support of their equilibrium measures are $C^1$ with bounded derivatives on a
neighborhood of $V$ in $X_D$. Thus, for sufficiently large values of $N$,
\begin{align}
 a_{V,f}-a_V,b_{V,f}-b_V=\O\left(\frac{\|f\|}{N}\right),\label{endpoints}
\end{align}
where the subscript $_{V,f}$ is short for $_{V-f/N}$. We can now formulate a first result on kernels $K_{N,V,f;L}$ associated to 
determinantal ensembles of the form $P_{N,V,f;L}$ that has essentially been proved in \cite{KSSV}.
\begin{prop}[cf. {\cite[Theorem 1.8]{KSSV}}] \label{keyproposition}
Let $\map{V}{[-L,L]}{\R}$ be real-analytic with $\inf_{t\in[-L,L]}V''(t)>0$. Assume that the support
$[a_V,b_V]$ of the equilibrium measure $\mu_V$ is contained in $(-L,L)$ and recall the notations introduced in this appendix. 
Moreover, define
\begin{align*}
\g_V:=\frac{(2G_V(1))^{2/3}}{b_V-a_V} \, , \quad \hat{K}_{N,V,f; L} (s, t) := \frac{1}{N^{2/3}\g_V} K_{N,V,f; L} \left(b_V+ 
\frac{s}{N^{2/3}\g_V}, 
b_V+ \frac{t}{N^{2/3}\g_V} \right),
\end{align*}
where $s$, $t \in \R$. Then we have for fixed $0<\k<1/3$, $0 < \e < \min (2/15, 2/3 -2 \k)$, $q \in \R$,  
\begin{align*}
\hat{K}_{N,V,f; L} (s, t) = 
\begin{cases}
K_\Ai(s,t) + \mathcal{O}\lb N^{\kappa - 1/3}\rb \, , &\textit{if } q\leq s, t \leq 2,\\
K_\Ai(s,t) \left( 1 + \O\lb N^{\kappa +\e /2 - 1/3} \rb  \right) \, , &\textit{if } 1\leq s, t \leq N^{\e},\\
K_\Ai(s,t)  + \Ai'(t) \O\lb N^{\kappa - 1/3} \rb \, ,   &\textit{if } q\leq s\leq 1; 2\leq t \leq N^{\e}
\end{cases}
\end{align*}
with the $\O$ terms being uniform in $N$, in $f\in X_D$ with $\|f\|_{D}\leq N^\kappa$ and in $s, t$ within the respective 
regions.
\end{prop}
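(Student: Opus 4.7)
The plan is to deduce the proposition directly from \cite[Theorem 1.8]{KSSV}, which provides the three--regime edge asymptotics of the rescaled Christoffel-Darboux kernel associated to a real-analytic, strictly convex external field on $[-L,L]$, together with a uniformity statement on perturbations of this field. The result there, however, is stated with the centre and scale adapted to the perturbed potential, i.e. with $b_{V,f} := b_{V - f/N}$ and $\g_{V,f}$ in place of $b_V$ and $\g_V$. The core of the proof is therefore to show that replacing $(b_{V,f}, \g_{V,f})$ by $(b_V, \g_V)$ in the rescaling of the kernel only produces errors of the sizes claimed.

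First I would collect the smoothness estimates for the edge parameters. \cite[Lemma 2.4]{KSSV} gives $b_{V,f} - b_V = \O \lb \|f\|/N \rb$ (as already recorded in \eqref{endpoints}). Differentiating the defining integral \eqref{functionG} under the integral sign and invoking the same lemma yields $\g_{V,f} - \g_V = \O \lb \|f\|/N \rb$ on a neighbourhood of $V$ in $X_D$. Under the assumption $\|f\| \leq N^\k$ both differences are $\O \lb N^{\k -1} \rb$. Setting
\begin{align*}
\tilde K_{N,V,f;L}(s,t) := \frac{1}{N^{2/3} \g_{V,f}} K_{N,V,f;L} \lb b_{V,f} + \frac{s}{N^{2/3}\g_{V,f}}, b_{V,f} + \frac{t}{N^{2/3}\g_{V,f}} \rb ,
\end{align*}
\cite[Theorem 1.8]{KSSV} gives the three-case asymptotics for $\tilde K_{N,V,f;L}$ uniformly in $\|f\| \leq N^\k$. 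Writing $b_V + s/(N^{2/3}\g_V) = b_{V,f} + s'/(N^{2/3}\g_{V,f})$ one finds
\begin{align*}
s' = \tfrac{\g_{V,f}}{\g_V} s + N^{2/3}\g_{V,f}(b_V - b_{V,f}) = s + \O\lb N^{\k - 1/3}(1+\lv s\rv)\rb,
\end{align*}
and analogously for $t$, while the prefactor ratio $\g_{V,f}/\g_V$ differs from $1$ by $\O \lb N^{\k -1}\rb$. Thus $\hat K_{N,V,f;L}(s,t) = (1+\O(N^{\k-1})) \tilde K_{N,V,f;L}(s', t')$.

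Next I would translate the shifts $s \mapsto s'$, $t \mapsto t'$ into error estimates on $K_\Ai$ by a Lipschitz/Taylor argument. In the bounded regime $q \leq s, t \leq 2$ (Case 1) $K_\Ai$ is uniformly Lipschitz, so a shift of size $\O(N^{\k - 1/3})$ produces an additive error of the same order. In Case 3 the shift in the larger variable $t$ is best captured by differentiating in $t$; using the representation $K_\Ai(s,t) = \int_0^\infty \Ai(s+r) \Ai(t+r) dr$ and the asymptotics of $\Ai$ and $\Ai'$, the correction is of the form $\Ai'(t) \O(N^{\k - 1/3})$. The decisive case is Case 2, where $1 \leq s, t \leq N^\eps$: here $K_\Ai(s,t)$ decays super-exponentially and only a multiplicative error bound is meaningful. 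From the known large-argument expansion $\Ai(s) \sim (4\pi\sqrt{s})^{-1/2} e^{-\frac{2}{3} s^{3/2}}$ one checks that a shift $\d = \O(N^{\k-1/3})$ produces a relative change $1 + \O(\sqrt{s}\, \d) = 1 + \O(N^{\eps/2 + \k - 1/3})$, with the exponent negative thanks to the assumption $\eps < 2/3 - 2\k$. The same multiplicative estimate holds for $\Ai'$ and hence for $K_\Ai$.

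The main obstacle, and the place where most of the technical care is required, is Case 2: ensuring that the \emph{multiplicative} form of the error survives the composition of the three sources of perturbation (shift in $s$, shift in $t$, rescaling prefactor), uniformly in $s, t \in [1, N^\eps]$ and in $f$ with $\|f\| \leq N^\k$. This is exactly where the subtle interplay between $\k$ and $\eps$ encoded in $\eps < \min(2/15, 2/3 - 2\k)$ is needed: the $2/3 - 2\k$ bound controls the Airy shift error in Case 2, while the $2/15$ bound is required to keep the $\O$-terms coming from Theorem 1.8 of \cite{KSSV} itself small in the multiplicative sense in the same region. Once these bookkeeping issues are settled and combined with the $(1 + \O(N^{\k - 1}))$ prefactor conversion, the three-case statement of the proposition follows.
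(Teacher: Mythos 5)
Your proposal follows essentially the same route as the paper's proof: apply \cite[Theorem~1.8]{KSSV} with the $(b_{V,f},\g_{V,f})$-adapted rescaling, control $b_{V,f}-b_V$ and $\g_{V,f}-\g_V$ via \eqref{endpoints} and a perturbation estimate on $G_{V}$, relate the two rescalings through a shift $s\mapsto s'$, and convert that shift into Airy-kernel error terms region by region, with $\e<2/3-2\k$ controlling the multiplicative error in the regime $1\leq s,t\leq N^\e$. Two small points worth flagging: your displayed estimate $s'=s+\O\lb N^{\k-1/3}(1+|s|)\rb$ is stated too weakly — the two contributions are $s\,\O(N^{\k-1})$ and $\O(N^{\k-1/3})$, which combine to $\O(N^{\k-1/3})$ using $|s|\leq N^\e$ and $\e<2/3$, whereas your written bound would give a divergent term $N^{\e+\k-1/3}$ for $s$ near $N^\e$ (you implicitly use the correct bound in the sequel, so this is a notational slip rather than a gap); and one also needs the observation, made in the paper, that after the change of variables the shifted arguments $s',t'$ may fall slightly outside the nominal ranges of \cite[Theorem~1.8]{KSSV}, so the bounds there must be seen to persist on slightly enlarged neighborhoods.
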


\begin{proof}
The result in \cite[Theorem 1.8]{KSSV} provides leading order information on
$$\frac{1}{N^{2/3}\g_{V,f}}K_{N,V,f; L}\left(b_{V,f}+\frac{s}{N^{2/3}\g_{V,f}},b_{V,f}+\frac
{t} { N^ {2/3 } \g_{V,f}} \right)$$
together with error bounds that have the uniformity needed to prove Proposition \ref{keyproposition}.
We note in passing that \eqref{endpoints} ensures that, uniformly
for $f\in X_D$ with $\|f\|_{D}\leq N^\kappa$, $[a_{V,f},b_{V,f}]\subset (-L,L)$ for $N$ large enough. 

The reader should be aware of some differences in notation, e.g. the quantities $Q$, $V$, $\g_V^+$, $\operatorname{\mathbb{A}i}$ in 
\cite{KSSV} correspond to $V$, $V - f/N$, $\g_{V,f}(b_{V,f}-a_{V,f})/2$, $K_\Ai$ in the present paper.

Our first task is to replace $b_{V,f}$ by $b_V$ and $\g_{V,f}$ by $\g_V$.
Using
\begin{align*}
[(V-f/N)\circ \l_{V,f}]'' -  [V\circ \l_{V}]'' = [(\l_{V,f}')^2 V''\circ \l_{V,f} - 
(\l_{V}')^2 V''\circ \l_{V}]-(\l_{V,f}')^2 (f''/N)\circ \l_{V,f},
\end{align*}
relations \eqref{rescaling}-\eqref{endpoints}, the thrice differentiability of $V$ on $[-L, L]$, the analyticity of $f$ in $D$, and 
the positivity of $G_V$, one obtains 
\begin{align}
G_{V,f}(t)=G_V(t)(1+\O(\|f\|/N))\label{asymptoticsG}
\end{align}
uniformly for $f\in X_D$ with $\|f\|_{D}\leq N^\kappa$, $0<\kappa<1$ and also uniformly in $t\in \R$ with both $\l_V(t)$, 
$\l_{V,f}(t) \in [-L,L]$. This yields in particular $\g_{V,f}=\g_V(1+\O(\|f\|/N))$.
In order to be able to apply the results of \cite{KSSV} we relate to any $q \leq t \leq N^{\e}$ a number $\hat{t}=\hat{t}(N, V, f)$ 
by
\begin{align*}
 b_V+\frac{t}{N^{2/3}\g_V}=b_{V,f}+\frac{\hat{t}}{N^{2/3}\g_{V,f}}
\end{align*}
and in the same way we relate $\hat{s}$ to $s$. For all relevant values of $f$, $t$, and $s$ we obtain the uniform estimates
\begin{align}
 \hat{t}=t+t\O(\|f\|/N)+N^{2/3}\g_{V,f}(b_V-b_{V,f})=t+\O(N^{\k-1/3})\, , \quad \hat{s} = s +\O(N^{\k-1/3}).\label{Testimate2}
\end{align}
Next we discuss the different regions in the $(s, t)$ plane one by one.

{\bf (a)} $q \leq s, t \leq 2$: Application of Theorem 1.8. in \cite{KSSV} yields
\begin{align*}
\hat{K}_{N,V,f; L} (s, t) = \frac{\g_{V,f}}{\g_V} \lb K_\Ai(\hat{s},\hat{t}) + \mathcal{O} (N^{- 2/3}) \rb  
\end{align*}
and the claim follows from \eqref{Testimate2} and from $\g_{V, f}/\g_V = 1 + \O (N^{\k-1})$. Observe that we need a slightly 
modified version of the result in \cite{KSSV}, because one only has that $\hat{s}$, $\hat{t}$ lie in some small neighborhood of 
$[q, 
2]$. However, the proof in \cite{KSSV} shows that the statement also holds true in such an slightly enlarged neighborhood.
This remark applies to the next two cases as well.

{\bf (b)} $1 \leq s, t \leq N^{\e}$: Theorem 1.8 of \cite{KSSV} together with $\e < 2/15$ show
\begin{align*}
\hat{K}_{N,V,f; L} (s, t) =  K_\Ai(\hat{s},\hat{t}) \lb 1 + \mathcal{O} (N^{-1/3}) \rb .  
\end{align*}
The asymptotic formula for the Airy kernel (see e.g.~\cite[(4.23)]{KSSV}) implies the existence of some $C > 1$ with
\begin{align}
\frac{1}{C} \leq K_\Ai (s, t) (st)^{1/4} (\sqrt{s} + \sqrt{t}) \exp [\textstyle{\frac{2}{3}}(s^{3/2} + t^{3/2})] \leq C 
\label{KAi_asymp}
\end{align}
for all $s$, $t \geq 1$. Using e.g.~\cite[Proposition 4.2]{KSSV} together with the asymptotics of the Airy function and its 
derivative (see \cite[10.4.59, 10.4.61]{AbramowitzStegun}) one obtains a similar formula for the partial derivatives of the Airy 
kernel leading up to
\begin{align*}
|\nabla K_\Ai (s, t)| = K_\Ai (s, t) \O \lb \sqrt{s} + \sqrt{t}\rb =  K_\Ai (s, t) \O \lb N^{\e / 2}\rb.
\end{align*}
Since $\e < 2/3 -2\k$ relation \eqref{KAi_asymp} implies for all $s^*$, $t^*$ with distance $\O (N^{\k - 1/3})$ from $s$, resp. $t$
\begin{align*}
K_\Ai (s^*, t^*) = K_\Ai (s, t) \O (1)\, ,
\end{align*}
yielding the claim.

{\bf (c)} $q \leq s \leq 1; 2 \leq t \leq N^{\e}$: This time application of \cite{KSSV} gives for $\e < 2/15$
\begin{align*}
\hat{K}_{N,V,f; L} (s, t) = \left(  K_\Ai(\hat{s},\hat{t})  +  \Ai'(\hat{t}) \mathcal{O} (N^{-1/3}) \right) \lb1 +\mathcal{O} 
(N^{\k-1})\rb .  
\end{align*}
The standard techniques used to derive the asymptotics of the Airy kernel in \cite{KSSV} (see in particular Proposition 4.2 there) 
yield the following estimates for the values of $s$ and $t$ considered in the present case:
\begin{align*}
K_\Ai(s, t) = \Ai'(t) \O (t^{-1})\, , \quad |\nabla K_\Ai(s, t) | = \Ai'(t) \O (t^{-1/2})\, , \quad \Ai'(\hat{t}) = \Ai'(t) \O(1)\, 
,
\end{align*}
where we have used $\e < 2/3 -2\k$ to derive the last relation. This completes the proof.
\end{proof}

In the regime of moderate and large deviations Proposition \ref{keyproposition} does not provide the leading order description,  
which is the content of the following proposition. As it turns out, it suffices 
to consider the Christoffel-Darboux kernel on the diagonal.  In order to simplify the resulting formulas we now make 
use of the assumed evenness of $Q$ and $h$ that implies the evenness of $V$ (cf. Remark \ref{remark_thrm1} b)) and hence 
$a_V=-b_V$.

\begin{prop}[cf. {\cite[Theorem 1.5]{KSSV}}]\label{prop_LD}
 Let $V$ satisfy the assumptions of Proposition \ref{keyproposition} and assume in addition $a_V=-b_V$. In addition to the 
notations introduced in this appendix, recall the definition of the function $\eta_V$ in \eqref{functioneta}. There exist
constants $c_0 > 0$ and $C_0 \geq 1$ only
depending on $V$ such that for

\begin{enumerate}
\item
$b_V + C_0N^{-2/3} < t < L$:
\begin{align*}
K_{N,V,f;L}(t,t)=\frac{b_V e^{-N\eta_V(t/b_V) + \O \lb \sqrt{t-b_V} \|f\| \rb}}{4\pi (t^2-b_V^2)} 
\left[1 
+ \O \! \lb \frac{\|f\|}{N\lv t-b_{V}\rv} \rb
+\O \! \lb
\frac{1}{N\lv t-b_{V}\rv^{3/2}} \! \rb
\right].
\end{align*}
\item
$b_V + C_0N^{-2/3} < t < L - 1$:
\begin{align*}
\int_t^L \!\!\!\! K_{N,V,f;L}(y,y) dy=\frac{b_V^2 e^{-N\eta_V(t/b_V) + \O \lb \sqrt{t-b_V} \|f\| \rb}}{4\pi N (t^2-b_V^2) \eta_V'(t/b_V)} 
\left[1 
+ \O \! \lb \frac{\|f\|}{N\lv t-b_{V}\rv} \rb
+\O \! \lb
\frac{1}{N\lv t-b_{V}\rv^{3/2}} \! \rb
\right].
\end{align*}
\end{enumerate}
All $\O$ terms appearing in  statements a) or b) are uniform in $N$, in $t\in(b_V + C_0N^{-2/3}, L)$ resp. in $t\in(b_V + 
C_0N^{-2/3}, L-1)$, and in $f\in X_D$ with $\|f\| \equiv \|f\|_{D}\leq c_0N(t-b_V)$.

\end{prop}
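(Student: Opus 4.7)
The plan is to derive Proposition \ref{prop_LD} from \cite[Theorem 1.5]{KSSV} applied to the determinantal ensemble $P_{N, V-f/N; L}$, then transfer its natural conclusions (which involve the perturbed quantities $b_{V,f}$ and $\eta_{V,f}$) back to the unperturbed quantities $b_V$ and $\eta_V$. The standing assumption $\|f\| \leq c_0 N(t - b_V)$ is designed precisely so that $\|f\|/N$ is small compared to $t - b_V$; this smallness drives every perturbative expansion in the argument.

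For part a), \cite[Theorem 1.5]{KSSV} applied to $V - f/N$ (whose equilibrium measure satisfies $a_{V,f} = -b_{V,f}$ by evenness, so that $\l_{V,f}^{-1}(t) = t/b_{V,f}$) yields, uniformly on the admissible range,
\begin{align*}
K_{N,V,f;L}(t,t) = \frac{b_{V,f}\,e^{-N\eta_{V,f}(t/b_{V,f})}}{4\pi(t^2 - b_{V,f}^2)}\left[1 + \O\!\lb \frac{1}{N(t - b_{V,f})^{3/2}}\rb\right].
\end{align*}
The range $t > b_{V,f} + C_0 N^{-2/3}$ matches $t > b_V + C_0' N^{-2/3}$ up to a constant thanks to \eqref{endpoints}. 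Writing $t^2 - b_{V,f}^2 = (t^2 - b_V^2) + (b_V^2 - b_{V,f}^2)$ and using $|b_V^2 - b_{V,f}^2| = \O(\|f\|/N)$ against $t^2 - b_V^2 \geq c(t - b_V)$ converts the prefactor at the cost of a multiplicative error $\O(\|f\|/(N(t - b_V)))$.

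The main obstacle is to convert the exponent. Split
\begin{align*}
N[\eta_{V,f}(t/b_{V,f}) - \eta_V(t/b_V)]
&= N\!\int_1^{t/b_V}\!\!\sqrt{u^2 - 1}\,(G_{V,f} - G_V)(u)\,du \\
&\quad + N\!\int_{t/b_V}^{t/b_{V,f}}\!\!\sqrt{u^2 - 1}\,G_{V,f}(u)\,du.
\end{align*}
For the second integral, $|t/b_{V,f} - t/b_V| = \O(\|f\|/N)$ by \eqref{endpoints} while $\sqrt{u^2 - 1} \leq C\sqrt{t - b_V}$ on the short interval, giving $\O(\sqrt{t - b_V}\|f\|)$. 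For the first integral, \eqref{asymptoticsG} in the form $(G_{V,f} - G_V)(u) = G_V(u)\,\O(\|f\|/N)$ produces a contribution bounded by $\O(\|f\|)\,\eta_V(t/b_V)$. The two regimes $t - b_V \leq 1$ (where $\eta_V(t/b_V) \leq C(t-b_V)^{3/2} \leq C\sqrt{t-b_V}$) and $t - b_V > 1$ (where $\eta_V(t/b_V)$ is bounded while $\sqrt{t-b_V}$ is bounded below) together show $\eta_V(t/b_V) \leq C\sqrt{t - b_V}$ throughout the admissible range, so this contribution is also $\O(\sqrt{t - b_V}\|f\|)$. This careful bookkeeping is what forces the exponent error to appear in the unusual form $\O(\sqrt{t - b_V}\|f\|)$.

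For part b), I integrate the formula from part a) over $y \in (t, L)$. Since $y \mapsto N\eta_V(y/b_V)$ is strictly increasing with $\eta_V'$ uniformly positive on the admissible range, the integrand is sharply concentrated near $y = t$. A Laplace expansion $N\eta_V(y/b_V) = N\eta_V(t/b_V) + (N/b_V)\eta_V'(t/b_V)(y - t) + \O(N(y-t)^2)$, together with replacement of $1/(y^2 - b_V^2)$ by $1/(t^2 - b_V^2)$ on the dominant part of the integration range and the observation that the contribution from $y$ near $L$ is exponentially suppressed, yields
\begin{align*}
\int_t^L K_{N,V,f;L}(y,y)\,dy = K_{N,V,f;L}(t,t)\cdot \frac{b_V}{N\eta_V'(t/b_V)}\,\bigl[1 + \text{errors of the claimed form}\bigr].
\end{align*}
Substituting the asymptotic from part a) produces the stated formula. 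The saddle-point argument is not disturbed by the $\O(\sqrt{y - b_V}\|f\|)$ factor in the exponent because the restriction $\|f\| \leq c_0 N(t - b_V) \leq c_0 N(y - b_V)$ keeps this term under control uniformly for $y \geq t$.
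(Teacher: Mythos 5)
Your overall plan --- apply \cite[Theorem 1.5]{KSSV} to the perturbed field $V-f/N$, then pass from the perturbed quantities $a_{V,f}$, $b_{V,f}$, $\eta_{V,f}$ back to $b_V$, $\eta_V$ using \eqref{endpoints} and \eqref{asymptoticsG}, and finally evaluate the integral in part b) by a Laplace-type argument --- is the same as the paper's. However, the very first step contains a genuine error: you assert that the equilibrium measure of $V-f/N$ satisfies $a_{V,f}=-b_{V,f}$ ``by evenness,'' hence $\l_{V,f}^{-1}(t)=t/b_{V,f}$. This is false. The hypothesis $a_V=-b_V$ comes from the evenness of $V$, but the perturbation $f$ is a sample path of the Gaussian process in \eqref{process_representation}, which is not even, and the proposition is required to hold uniformly over all $f\in X_D$ with $\|f\|_D\leq c_0N(t-b_V)$, including non-even $f$. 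So $V-f/N$ is not even, $a_{V,f}\neq -b_{V,f}$, and $\l_{V,f}^{-1}(t)\neq t/b_{V,f}$ in general. The paper's proof (see \eqref{asymptoticsK}) keeps the two endpoints separate: the prefactor is $\frac{b_{V,f}-a_{V,f}}{4(t-b_{V,f})(t-a_{V,f})}$ and the phase is $\eta_{V,f}(\l_{V,f}^{-1}(t))$, both converted afterwards using $a_{V,f}-(-b_V),\,b_{V,f}-b_V=\O(\|f\|/N)$. Your estimates, once corrected by replacing $t/b_{V,f}$ with $\l_{V,f}^{-1}(t)$ and $t^2-b_{V,f}^2$ with $(t-b_{V,f})(t-a_{V,f})$, still yield the claimed error bounds (both quantities differ from the symmetric versions by $\O(\|f\|/N)$), so the gap is repairable --- but as written the intermediate identities are not valid.

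A second, smaller point concerns part b). The paper performs the Laplace-type computation (the substitution argument from \cite[Lemma 4.8]{DissSchueler} leading to \eqref{Taux3}) on the \emph{exact} perturbed phase $\eta_{V,f}\circ\l_{V,f}^{-1}$, and converts to the unperturbed $\eta_V$ and $\eta_V'$ only at the single point $t$ at the end. You convert first, obtaining an exponent that carries an $f$- and $y$-dependent error $\O(\sqrt{y-b_V}\,\|f\|)$, and then integrate. For $y$ near $L$ this error can be of order $N(y-b_V)(t-b_V)$ under the hypothesis $\|f\|\leq c_0 N(t-b_V)$, so the claim that ``the contribution from $y$ near $L$ is exponentially suppressed'' is not automatic: it requires choosing $c_0$ small enough that $c_0\sqrt{y-b_V}(t-b_V)$ is dominated by the lower bound on $\eta_V(y/b_V)$, which you gesture at (``keeps this term under control'') but do not verify. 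The paper's order of operations avoids this verification altogether.
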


\begin{proof}
From the arguments given in the beginning of the proof of Proposition \ref{keyproposition} it follows that we can apply the result 
\cite[Theorem 1.5 (ii)]{KSSV} to the ensembles $P_{N,V-f/N; L}$ provided that $\|f\|/N$ is sufficiently small which we 
can always achieve by choosing the constant $c_0$ in the statement of Proposition \ref{prop_LD} small enough. Hence there exists a 
positive number $\tilde{C}$ such that for all $x>1+\tilde{C}N^{-2/3}$ with $\l_{V,f}(x) \leq L$
\begin{align*}
 K_{N,V,f; L}(\l_{V,f}(x),\l_{V,f}(x))=\frac{1}{2\pi (b_{V,f}-a_{V,f})}e^{-N\eta_{V,f}(x)}\frac{1}{x^2-1}\left[1+\O\lb
\frac{1}{N\lv x - 1\rv^{3/2}}\rb\right]
\end{align*}
holds with the required uniformity.
Setting
$x=\l_{V,f}^{-1}(t)$
we immediately arrive at
\begin{align}
  K_{N,V,f; L}(t,t)=\frac{1}{2\pi} e^{-N\eta_{V,f}\lb\l_{V,f}^{-1}(t)\rb} \frac{b_{V,f}-a_{V,f}}{4(t-b_{V,f})(t-a_{V,f})}
\left[1
+\O\lb
\frac{1}{N\lv t-b_{V,f}\rv^{3/2}}\rb\right].\label{asymptoticsK}
\end{align}
Using again the smallness of $\|f\|/[N(t-b_V)]$, we learn from \eqref{endpoints} that for real exponents $\alpha$ one has
\begin{align}
(t-b_{V, f})^{\alpha} = (t-b_V)^{\alpha}\left[1 + \O_\alpha \lb \frac{\|f\|}{N\lv t-b_{V}\rv} \rb \right]\label{comparisontb}
\end{align}
and consequently
\begin{align}
\textstyle
 \frac{b_{V,f}-a_{V,f}}{4(t-b_{V,f})(t-a_{V,f})}
\left[1
+\O\lb
\frac{1}{N\lv t-b_{V,f}\rv^{3/2}}\rb\right] = \frac{b_V}{2(t^2-b_V^2)} \left[1 + \O \lb \frac{\|f\|}{N\lv t-b_{V}\rv} \rb
+\O\lb
\frac{1}{N\lv t-b_{V}\rv^{3/2}}\rb\right].\label{Taux1}
\end{align}
We now turn to the exponential term. Applying \eqref{asymptoticsG} to the definition of $\eta_V$ in \eqref{functioneta}, we see 
$\eta_{V,f}=\eta_V(1+\O(\|f\|/N))$ uniformly on the domain of interest. Thus
\begin{align*}
\eta_{V,f}\lb\l_{V,f}^{-1}(t)\rb -\eta_{V}\lb\l_{V,f}^{-1}(t)\rb = \eta_{V}\lb\l_{V,f}^{-1}(t)\rb \O (\|f\|/N) .
\end{align*}
Moreover, it is straightforward to see that $\l_V^{-1}(t) = t/b_V$, $\l_V^{-1}(t) -\l_{V,f}^{-1}(t) =  \O (\|f\|/N)$, $\eta_V (x) = 
\O(\lv x-1\rv^{3/2})$, and $\eta_V' (x) = \O(\lv x-1\rv^{1/2})$. Using in addition \eqref{comparisontb} we obtain
\begin{align*}
\eta_{V,f}\lb\l_{V,f}^{-1}(t)\rb -\eta_{V}\lb\l_{V,f}^{-1}(t)\rb =& \O \lb \lv t-b_{V}\rv^{3/2} \|f\|/N\rb, \\
\eta_{V}\lb\l_{V,f}^{-1}(t)\rb - \eta_{V}\lb t/b_V\rb =& \O \lb \lv t-b_{V}\rv^{1/2} \|f\|/N\rb,
\end{align*}
that leads to 
\begin{align}
e^{-N\eta_{V,f}\lb\l_{V,f}^{-1}(t)\rb} = e^{-N\eta_V(t/b_V) + \O \lb \sqrt{t-b_V} \|f\| \rb}.
\label{Taux2}
\end{align}
Combining  \eqref{asymptoticsK} with \eqref{Taux1} and \eqref{Taux2} completes the proof of statement a).

The essential relation for establishing the second claim is 
\begin{align}
\int_t^L \frac{e^{-N\eta(y)}}{(y-b)(y-a)} dy = \frac{e^{-N\eta(t)}}{N(t-b)(t-a)\eta'(t)} \left[1
+\O\lb
\frac{1}{N\lv t-b\rv^{3/2}}\rb\right]
\label{Taux3}
\end{align}
with an $\O$ term that is uniform in $N$, $t$, and $f$ in their respective domains and where $a = a_{V,f}$, $b = b_{V,f}$ and 
$\eta = \eta_{V,f} \circ \l_{V,f}^{-1}$. We first sketch the derivation of \eqref{Taux3} following \cite[Lemma 4.8]{DissSchueler} 
(cf. \cite{EKS}). Since $\eta$ is strictly increasing the substitution $u(y) := \eta(y)-\eta(t)$ is invertible and we obtain
\begin{align}
\int_t^L \!\! \frac{e^{-N\eta(y)}}{(y-b)(y-a)} dy = e^{-N\eta(t)} \int_0^{\eta(L) - \eta(t)} \!\!e^{-N u}k(u)du, \ \ \
k(u) := \frac{1}{(y(u)-b)(y(u)-a)\eta'(y(u))}.\nonumber
\end{align}
Observe that $k(u) = k(0) + u k'(\tilde{u})$ for some $\tilde{u} \in (0, u)$. In addition, straightforward estimates yield
\begin{align}
\frac{k'(\tilde{u})}{k(\tilde{u})} = \O \lb
\frac{1}{\lv y(\tilde{u})-b\rv^{3/2}}\rb \quad \text{and by monotonicity of $k$ and $y$:}\quad k'(\tilde{u}) = k(0) \O \lb
\frac{1}{\lv t-b\rv^{3/2}}\rb.\nonumber
\end{align}
We arrive at
\begin{align}
\int_t^L \!\! \frac{e^{-N\eta(y)}}{(y-b)(y-a)} dy = e^{-N\eta(t)} k(0) \left[ \frac{1}{N} \lb 1 + \O (e^{-dN})\rb + \frac{1}{N^2} \O \lb
\frac{1}{\lv t-b\rv^{3/2}}\rb
\right]\nonumber
\end{align}
with $d = \eta(L) - \eta(L-1) > 0$. Evaluating $k(0)$ provides \eqref{Taux3}.

Note that \eqref{Taux3} implies in particular that $\int_t^L K_{N,V,f;L}(y,y) dy$ has exactly the same representation as 
$K_{N,V,f;L}(t,t)$ in \eqref{asymptoticsK} except for additional factors $N$ and $(\eta_{V,f} \circ \l_{V,f}^{-1})'(t)$ in the denominator.
Thus the arguments in the proof of statement a) below \eqref{asymptoticsK} together with 
\begin{align}
(\eta_{V,f} \circ \l_{V,f}^{-1})'(t) &= \left( \frac{2}{b_{V,f} - a_{V,f}}\right)^2 \sqrt{(t-b_{V,f})(t-a_{V,f})} G_{V,f}(\l_{V,f}^{-1}(t)) \\
&=\frac{1}{b_V} \eta_V'(t/b_V)\left[1 
+ \O \! \lb \frac{\|f\|}{N\lv t-b_{V}\rv} \rb \right]
\nonumber
\end{align}
prove statement b).
\end{proof}

\bibliographystyle{alpha}
\bibliography{bibliography}

\end{document}